\newtheorem{Theorem}{Theorem}
\newtheorem{proposition}{Proposition}
\newtheorem{corollary}{Corollary}
\newtheorem{conjecture}{Conjecture}
\newtheorem{lemma}{Lemma}
\theoremstyle{definition}
\newtheorem{definition}{Definition}
\newtheorem{Remark}{Remark}
\newcommand{\enstq}[2]{\left\{#1\mathrel{}\middle|\mathrel{}#2\right\}}
\newcommand{\norm}[1]{\left\|#1\right\|}
\newcommand{\sinc}[0]{\textup{sinc}}
\newcommand{\N}{\mathbb{N}}
\newcommand{\Z}{\mathbb{Z}}
\newcommand{\R}{\mathbb{R}}
\newcommand{\Crad}{C^\infty_{c,rad}(B)}
\newcommand{\Lrad}{L^2_{rad}(B)}
\newcommand{\Hrad}{H^1_{rad}(B)}
\newcommand{\Hzrad}{H^1_{0,rad}(B)}
\newcommand{\rmin}{\delta_{\min}}
\newcommand{\rmax}{\delta_{\max}}
\newcommand{\abs}[1]{\left\lvert #1 \right\rvert}
\newcommand{\bs}[1]{\boldsymbol{#1}}
\newcommand{\varInRange}[4]{(#1_{#2})_{#3 \leq #2 \leq #4}}
\newcommand{\from}{\colon}
\newcommand{\Cinf}{C^{\infty}}
\newcommand{\isdef}{\mathrel{\mathop:}=}
\pgfplotsset{compat=newest}
\begin{document}

\author{Martin Averseng}

\title{ Fast discrete convolution in $\R^2$ using Sparse Bessel Decomposition }

\author{Martin Averseng}
\date{	\today  }

\definecolor{mycolor1}{rgb}{0.00000,0.44700,0.74100}%
\definecolor{mycolor2}{rgb}{0.85000,0.32500,0.09800}%
\definecolor{mycolor3}{rgb}{0.92900,0.69400,0.12500}%
\definecolor{mycolor4}{rgb}{0.49400,0.18400,0.55600}%

\maketitle
\noindent\abstract{\noindent We describe an efficient algorithm for computing the matrix vector products that appear in the numerical resolution of boundary integral equations in 2 space dimension. This work is an extension of the so-called Sparse Cardinal Sine Decomposition algorithm by Alouges et al., which is restricted to three-dimensional setups. Although the approach is similar, significant differences appear throughout the analysis of the method. Bessel decomposition, in particular, yield longer series for the same accuracy. We propose a careful study of the method that leads to a precise estimation of the complexity in terms of the number of points and chosen accuracy. We also provide numerical tests to demonstrate the efficiency of this approach. We give the compression performance for a $N\times N$ linear system for several values $N$ up to $10^7$ and report the computation time for the off-line and on-line parts of our algorithm. We also include a toy application to sound canceling to further illustrate the efficiency of our method.}

%
%

\section*{Introduction}
The Boundary Element Method requires the resolution of fully populated linear systems $Au = b$. As the number of unknowns gets large, the storage of the matrix $A$ and the computational cost for solving the system through direct methods (e.g. LU factorization) become prohibitive. Instead, iterative methods can be used, which require very fast evaluations of matrix vector products. In the context of boundary integral formulations, this takes the form of discrete convolutions:
\begin{equation}
	q_k = \sum_{l=1}^{N_z} G(\boldsymbol{z}_k - \boldsymbol{z}_l) f_l, \quad k \in \left\{1, \cdots, N_z\right\}.
	\label{discreteConv}					
\end{equation}
Here, $G$ is the Green's kernel of the partial differential equation under consideration, $\bs{z} = \varInRange{\bs{z}}{k}{1}{N_z}$  is a set of points in $\mathbb{R}^2$, with diameter $\rmax$, $\abs{\cdot}$ is the Euclidian norm and $f = \varInRange{f}{k}{1}{N_z}$ is a vector (typically the values of a function at the points $\bs{z}_k$). For example, the resolution of the Laplace equation with Dirichlet boundary conditions leads to \eqref{discreteConv} with $G(\bs{x}) = -\frac{1}{2\pi}\log \abs{\bs{x}}$ (the kernel of the single layer potential). 

In principle, the effective computation of the $(q_k)_{1 \leq k\leq N_z}$ using \eqref{discreteConv} requires $O(N_z^2)$ operations. However, several more efficient algorithms have emerged to compute an approximation of \eqref{discreteConv} with only quasilinear complexity in $N_z$. Among those are the celebrated Fast Multipole Method (see for example \cite{greengard1988rapid,rokhlin1990rapid, rokhlin1993diagonal, coifman1993fast, cheng1999fast} and references therein), the Hierarchical Matrix \cite{borm2003introduction}, and more recently, the Sparse Cardinal Sine Decomposition (SCSD) \cite{Alouges2015}.

One of the key ingredients in all those methods consists in writing the following local variable separation:
\[G(\bs{z}_k- \bs{z}_l) \approx \sum_j \lambda_j G^j_1(\bs{z}_k)G^j_2(\bs{z}_l),\] 
which needs to be valid for $\bs{z}_k$ and $\bs{z}_l$ arbitrarily distant from each other, and up to a controlled accuracy. This eventually results in compressed matrix representations and accelerated matrix-vector product. Notice that, to be fully effective, the former separation is usually made locally with the help of a geometrical splitting of the cloud of points $\bs{z}$ using a hierarchical octree.

Here we present an alternative compression and acceleration technique, which we call the Sparse Bessel Decomposition (SBD). This is an extension of the SCSD adapted to 2-dimensional problems. The SBD and SCSD methods achieve performances comparable to the aforementioned algorithms, they are flexible with respect to the kernel $G$, and do not rely on the construction of octrees, which makes them easier to implement. In addition, they express in an elegant way the intuition according to which a discrete convolution is nothing but a product of Fourier spectra. 

The method heavily relies on the Non Uniform Fast Fourier Transform (NUFFT) of type III (see the seminal paper \cite{NuFFT} and also \cite{greengard2004accelerating,poplau2006calculation} and references therein for numerical aspects and open source codes).  The NUFFT is a fast algorithm, which we denote by $\operatorname{NUFFT}_\pm[\bs{z},\boldsymbol{\xi}](\alpha)$ for later use, that returns, for arbitrary sets of $N_z$ points $\bs{z}$ and $N_\xi$ points $\bs{\xi}$ in $\mathbb{R}^2$ and a complex vector $\alpha  \in \mathbb{C}^{N_z}$, the vector $q \in \mathbb{C}^{N_\xi}$ defined by:
\[ q_\nu = \sum_{k = 1}^{N_z} e^{\pm i \bs{z}_k \cdot \boldsymbol{\xi}_\nu} \alpha_k, \quad \nu \in \left\{1,\cdots,N_\xi \right\}.\]
This algorithm generalizes the classical Fast Fourier Transform (FFT) \cite{cooley1965algorithm}, to nonequispaced data, preserving the quasi-linear complexity in $N_{z,\xi} \isdef \max(N_z,N_\xi)$.
The SBD method first produces a discrete and sparse approximation of the spectrum of $G$,
\begin{equation}
	\label{Gapprox}
	G(\bs{x}) \approx G_{\text{approx}}(\boldsymbol{x}) \isdef \sum_{\nu=1}^{N_\xi} e^{i  \boldsymbol{x}\cdot \boldsymbol{\xi}_\nu} \hat{\omega}_\nu, \quad \abs{\bs{x}} \leq \rmax.
\end{equation}
This approximation is replaced in \eqref{discreteConv} to yield 
\begin{equation}
	\begin{split}	q_{k} &\approx \left(\sum_{\nu = 1}^{N_\xi} e^{+i  \bs{z}_k  \cdot\boldsymbol{\xi}_\nu } \left[\hat{\omega}_\nu \sum_{l=1}^{N_z} e^{- i \bs{z}_l \cdot \boldsymbol{\xi}_\nu} f_l) \right]\right)_{1 \leq k \leq N_z}\\
		&= \operatorname{NUFFT}_+[\bs{z},\boldsymbol{\xi}]\left(\hat{\omega} \odot \operatorname{NUFFT}_-[\bs{z},\boldsymbol{\xi}]\big(f\big)\right).
	\end{split}
	\label{far convolution}					
\end{equation}
where $\odot$ denotes the elementwise product between vectors. The decomposition \eqref{Gapprox} is obtained "off-line" and depends on the value of $\rmax$, but is independent of the choice of the vector $f$ and can thus be used for any evaluation of the matrix vector product \eqref{discreteConv}. 
The approximation \eqref{far convolution} reduces the complexity from $O(N_z^2)$ to $O(N_{z,\xi}\log (N_{z,\xi}))$, stemming from the NUFFT complexity.

The NUFFT has already been used in the literature for the fast evaluation of quantities of the form \eqref{discreteConv}. In particular, our algorithm shares many ideas with the approach in \cite{potts2004fast}. The method presented therein also relies on an approximation of the form \eqref{Gapprox}. However, we choose the set of frequencies $\bs{\xi}$ in a different way, leading to a sparser representation of $G$ (see \autoref{RemarkPotts}). 

The kernel $G$ is usually singular near the origin, in which case \eqref{Gapprox} can only be accurate for $|\bs{z}|$ above some threshold $\rmin$. Part of the SBD algorithm is thus dedicated to computing a local correction using a sparse matrix (which we call the close correction matrix, denoted by $D$ in the sequel) to account for the closer interactions. This threshold must be chosen so as to balance the time spent for computing the far \eqref{far convolution} and those close contributions. As a matter of fact, we shall prove the following:

\begin{Theorem} Assume the points $\bs{z}$ are uniformly distributed on a regular curve, and $G(\bs{x}) = \log \abs{\bs{x}}$. Let $\varepsilon > 0$ the desired accuracy of the method, and assume $N_z > \abs{\log{\varepsilon}}$. Fix 
	\[a =\dfrac{\abs{\log\varepsilon}^{2/3}}{N_z^{2/3 - \alpha}}\]
	for some $\alpha \in \left[0,\frac{1}{6}\right]$, and choose 
	\[\rmin = a \rmax.\] 
	Then there exists a constant $C>0$ independent of $N_z$, $\varepsilon$ and $\alpha$ such that:
	\label{The:GlobalComplexity}
	\begin{itemize}
		\item[(i)] The number of operations required for the computation of the representation \eqref{Gapprox} valid for $\abs{\bs{x}} \in [\rmin,\rmax]$  is bounded by 
		      \[ C_{\textup{off}}(N_z,\varepsilon,\alpha) \leq C \abs{\log \varepsilon} N_z^{2 - 3\alpha};\]
		\item[(ii)] The number of operations required for the assembling of the close correction matrix $D$ is bounded by
		      \[C_{\textup{assemble}}(N_z,\varepsilon,\alpha)\leq C \abs{\log\varepsilon}^{2/3}  C_{\textup{NUFFT}}(\varepsilon) N_z^{4/3 + \alpha}\log(N_z);\]
		\item[(iii)] Once these two steps have been completed, \eqref{discreteConv} can be evaluated for any choice of vector $f$ at a precision at least $\varepsilon \displaystyle\sum_l \abs{f_l}$ in a number of operations bounded by
		      \[C_{\textup{on}}(N_z,\varepsilon,\alpha) \leq C \abs{\log\varepsilon}^{2/3} \left(  N_z ^{4/3 + \alpha} + C_{\textup{NUFFT}}(\varepsilon)N_z^{4/3 - 2\alpha}\log N_z \right).\] 
	\end{itemize} 
\end{Theorem}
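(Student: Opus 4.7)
The plan is to reduce the theorem to three independent estimates that each follow by substituting the prescribed choice $a = |\log\varepsilon|^{2/3}/N_z^{2/3-\alpha}$ into generic scaling bounds: (a) the number $N_\xi$ of frequencies needed in the Bessel decomposition \eqref{Gapprox} to reach accuracy $\varepsilon$ on the annulus $\{\rmin \leq |\bs{x}| \leq \rmax\}$; (b) the sparsity (number of nonzeros) of the close correction matrix $D$ under the assumption that the points lie uniformly on a regular 1D curve of diameter $\rmax$; and (c) the quasi-linear NUFFT-III complexity $C_{\text{NUFFT}}(\varepsilon)(N_z + N_\xi)\log N_{z,\xi}$.

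For the key quantity $N_\xi$, I would invoke the Bessel-decomposition analysis developed earlier in the paper: because the kernel is radial and logarithmically singular, one needs roughly $|\log\varepsilon|/a$ radial Bessel modes on $[\rmin,\rmax]$, and the angular quadrature contributes the same multiplicative factor, giving $N_\xi = O(|\log\varepsilon|^2/a^2) = O(|\log\varepsilon|^{2/3}N_z^{4/3 - 2\alpha})$. The condition $\alpha \in [0,1/6]$ combined with $N_z > |\log\varepsilon|$ ensures that $a \leq 1$ and that $N_\xi$ is at least of order $N_z$ up to logs, so that $N_{z,\xi} = N_\xi$ in the NUFFT calls. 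Part (i) then follows by estimating the off-line construction of the decomposition: it reduces to a 1D problem in the radial variable of size $\sqrt{N_\xi} \sim |\log\varepsilon|/a$, whose resolution (a small dense linear or least-squares system) costs $O((|\log\varepsilon|/a)^3) = O(|\log\varepsilon|\, N_z^{2-3\alpha})$.

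Part (ii) starts from the geometric count of close pairs. Uniform distribution on a regular curve of diameter $\rmax$ gives a linear density $\sim N_z/\rmax$ along the curve, so each point has $O(N_z a)$ neighbours within distance $\rmin = a\rmax$, and hence $\mathrm{nnz}(D) = O(N_z^2 a) = O(|\log\varepsilon|^{2/3}N_z^{4/3+\alpha})$. Assembling $D$ means evaluating $G_{\text{approx}}$ (and the exact $G$, which is cheap) at each of these $O(\mathrm{nnz}(D))$ offset vectors; the batch evaluation of $G_{\text{approx}}$ is done by a single NUFFT-III with $N_\xi$ frequency inputs and $\mathrm{nnz}(D)$ spatial targets, of cost $C_{\text{NUFFT}}(\varepsilon)(N_\xi + \mathrm{nnz}(D))\log(\cdot)$. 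For $\alpha \geq 0$ the term $\mathrm{nnz}(D)$ dominates $N_\xi$, which yields precisely the bound stated in (ii). For part (iii), the on-line cost splits as the sparse matvec $Df$ of cost $O(\mathrm{nnz}(D))$, giving the first term, plus two NUFFT-III calls in \eqref{far convolution} of cost $C_{\text{NUFFT}}(\varepsilon)(N_z + N_\xi)\log N_{z,\xi} = O(C_{\text{NUFFT}}(\varepsilon)|\log\varepsilon|^{2/3}N_z^{4/3-2\alpha}\log N_z)$, giving the second. The prescribed accuracy follows at once from the triangle inequality
\[ \bigl|q_k - q_k^{\text{approx}}\bigr| \;\leq\; \sup_{\bs{x} \in [\rmin,\rmax]}\bigl|G(\bs{x}) - G_{\text{approx}}(\bs{x})\bigr|\sum_{l}|f_l| \;\leq\; \varepsilon\sum_l |f_l| \]
together with the construction of $G_{\text{approx}}$ and the exact handling of close pairs by $D$.

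The main obstacle I expect is justifying the scaling $N_\xi = O(|\log\varepsilon|^2/a^2)$ and the $O((|\log\varepsilon|/a)^3)$ off-line cost, uniformly in the two parameters $\varepsilon$ and $a$: these rely on a careful quantitative study of the Bessel quadrature which is developed in the earlier sections of the paper. Once these scalings are granted, the rest of the proof is a direct bookkeeping of complexities and a one-line accuracy estimate; the specific exponent $2/3$ in the choice of $a$ is what makes the off-line cost (i) balance the $N_\xi$-dependent NUFFT cost in (iii), while $\alpha$ parametrises the free trade-off between the close contribution $O(N_z^2 a)$ and the far contribution $O(N_\xi \log N_\xi)$.
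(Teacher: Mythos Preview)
Your proposal is correct and follows essentially the same route as the paper: the paper's proof (Section~\ref{sec:complexities}) also reduces everything to the three scalings $P=O(|\log\varepsilon|/a)$, $N_\xi=O(P^2)$, and $\#\mathcal{P}=O(N_z^2 a)$, then plugs in the prescribed value of $a$ and bookkeeps the NUFFT and sparse-matvec costs exactly as you do. Your identification of the $O(P^3)$ dense solve as the dominant off-line cost and your checks that $N_\xi\ge N_z$ (since $\alpha\le 1/6$) and $\#\mathcal{P}\ge N_\xi$ (since $\alpha\ge 0$) mirror the paper's reasoning precisely.
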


We prove \autoref{The:GlobalComplexity} using several steps. We first introduce the Fourier-Bessel series (\autoref{sec:FourierBesselSeries}). In \autoref{sec:SBD}, we present the Sparse Bessel Decomposition, and analyze its numerical stability. In section \autoref{sec:ApplicationLaplaceHelmholtz}, we apply the SBD method to the Laplace kernel, and give estimates for the number of terms to reach a fixed accuracy. We also explain how to adapt this method for other kernels. In \autoref{sec:circular}, we show how to convert a SBD decomposition into an approximation of the form \eqref{Gapprox} through what we call "circular quadratures" and provide error estimates. In \autoref{sec:complexities}, we summarize the complexities of each step and prove \autoref{The:GlobalComplexity}. We conclude with some numerical examples. 

Before this, let us start by a brief overview of our algorithm.

\section{Summary of the algorithm}
\setcounter{equation}{0}
\numberwithin{equation}{section} 
\label{sec:overview}

The SBD algorithm can be summarized as follows:
\paragraph{Off-line part}
\begin{itemize}
	\item[]\textbf{Inputs:} A radially symmetric kernel $G$, a set of $N_z$ points $\bs{z}$ in $\mathbb{R}^2$ of diameter $\rmax$, a value for the parameter $\rmin$, a tolerance $\varepsilon > 0$.
	\item[]\textbf{Sparse spectrum sampling:} Compute a set of $N_\xi$ complex weights $\hat{\omega}$ and $N_\xi$ frequencies $\boldsymbol{\xi}$ so that \eqref{Gapprox} is valid for $\rmin \leq |\boldsymbol{x}| \leq \rmax$ up to tolerance $\varepsilon$. 
	\item[]\textbf{Correction Matrix:} Determine the set $\mathcal{P}$ of all pairs $(k,l)$ such that $\abs{\bs{z}_k - \bs{z}_l} \leq \delta_{\min}$ (fixed-radius neighbor search). Assemble the close correction sparse matrix:
	      \begin{equation}
	      	\label{defD}
	      	D_{kl} = \delta_{(k,l) \in \mathcal{P}} \left( G({\bs{z}_k - \bs{z}_l}) - \sum_{\nu = 1}^{N_{\xi}}e^{i (\bs{z}_k - \bs{z}_l)\cdot \boldsymbol{\xi}_\nu} \hat{\omega}_\nu\right).
	      \end{equation}
	      Notice that the second term is a non-uniform discrete Fourier transform. Indeed, if we introduce $(\bs{y}_p)_{p \in \mathcal{P}}$ given by 
	      $\bs{y}_{(k,l)} = \bs{z}_k - \bs{z}_l$, and $\bs{Y} = \textup{NUFFT}_+[\bs{y},\bs{\xi}](\hat{\omega})$, the non-zero entries of $D$ are given by
	      \[ D_{k,l} = G(\bs{y}_{(k,l)}) - \bs{Y}_{(k,l)}.\]
	\item[] \textbf{Outputs}: The set of weights $\hat{\omega}$, the frequencies $\boldsymbol{\xi}$ and the sparse matrix $D$. 
\end{itemize}
\paragraph{On-line part}
\begin{itemize}
	\item[] \textbf{Input:} All outputs of the off-line part, and a complex vector $f$ of size $N_z$. 
	\item[] \textbf{Far approximation:} Compute, for all $k$,
	      \begin{equation}
	      	\label{FarApprox}
	      	q^{\text{far}} = \sum_{l=1}^{N_z} G_{\textup{approx}}(\bs{z}_k - \bs{z}_l) f_l.
	      \end{equation} 
	      For this, follow three steps
	      \begin{itemize}
	      	\item[(i)] \textbf{Space $\rightarrow$ Fourier: } Compute $\hat{f} = \textup{NUFFT}_-[\bs{z},\boldsymbol{\xi}](f).$
	      	\item[(ii)] \textbf{Fourier multiply:} Perform elementwise multiplication by $\hat{\omega}$:
	      	\[{\hat{g}_{\nu} = \hat{\omega}_\nu \hat{f_\nu}.}\]
	      	\item[(iii)] \textbf{Fourier $\rightarrow$ Space: } Compute $q^{\text{far}} =  \textup{NUFFT}_+[\bs{z},\boldsymbol{\xi}](\hat{g}).$
	      \end{itemize}
	\item[] \textbf{Close correction:} Compute the sparse matrix product:
	      \[q^{\textup{close}} = Df.\]
	\item[] \textbf{Output:} The vector $q = q^{\textup{far}} + q^{\textup{close}}$, with, for any $k \in \left\{1,\cdots,N_z\right\},$	
	      \[ \abs{q_k - \sum_{l = 1}^{N_z} G({\bs{z}_k - \bs{z}_l}) f_l} \leq \varepsilon \sum_{l=1}^{N_z} \abs{f_l}.\]
\end{itemize}

\subsection*{\textbf{The sparse spectrum sampling step:}}
The main novelty in our algorithm is the method for producing an approximation of the form \eqref{Gapprox}. We proceed in two steps, uncoupling the radial and circular coordinates. 
\paragraph{Sparse Dessel Decomposition:} In a first part, we compute a Bessel series approximating $G(r)$ on $[\rmin,\rmax]$, which we call a Sparse Bessel Decomposition. It takes the form
\[G(r) \approx G(\rmax) + \sum_{p=1}^P \alpha_p J_0(\rmax \rho_p r), \quad r \in [\rmin,\rmax],\]
where $J_0$ is the Bessel function of first kind and order zero and $(\rho_p)_{p \in \N*}$ is the sequence of its roots (see \autoref{defJ0} for more details). In order to compute the coefficients $\varInRange{\alpha}{p}{1}{P}$, we first choose a starting value for $P$ and compute the weights $\alpha_1,\cdots, \alpha_{P}$ that minimize the least square error
\[\bigintsss_{\rmin \leq |\bs{x}|\leq \rmax} \abs{\nabla \left(G(\bs{x}) - \sum_{p=1}^P \alpha_p J_0(\rho_p \rmax \abs{\bs{x}})\right)}^2 d\bs{x}.\]
This amounts to solving an explicit linear system. We keep increasing $P$ until the residual error goes below the required tolerance. To choose the stopping criterion, we suggest monitoring the $L^{\infty}$ error near $\rmin$ where it is usually the highest. The successive choices of $P$ are made using a dichotomy search. 
\paragraph{Circular quadrature:}In a second step, we use approximations of the form:
\[J_0(\rho_p \abs{\bs{x}}) \approx \dfrac{1}{M_p}\sum_{m=0}^{M_p-1}e^{i \rho_p \bs{\xi}_p^m \cdot \bs{x}}, \quad p \in \{1,\cdots,P\},\]
which are discrete versions of the identity:
\[ J_0(\rho_p\abs{\bs{x}}) = \frac{1}{2\pi}\int_{\abs{\bs{\xi}}=1}{e^{i \rho_p\bs{x} \cdot \bs{\xi}}} d\sigma(\bs{\xi}).\]
We sum them to eventually obtain the formula \eqref{Gapprox}. 
\begin{Remark}
	In the SCSD method \cite{Alouges2015}, the Bessel functions are replaced by cardinal sine functions, since, for $\bs{x} \in \R^3$
	\[ \frac{1}{4\pi}\int_{\abs{\bs{\xi}}=1} e^{i \bs{x}\cdot \bs{\xi}} d\sigma(\bs{\xi}) = \sinc(\abs{\bs{x}}),\]
	where the integral is now taken over $\mathbbm{S}^2 \subset \R^3$.
\end{Remark}

\section{Series of Bessel functions and error estimates}
\label{sec:FourierBesselSeries}
In this section, we give a short introduction to Fourier-Bessel series. A possible reference on this topic is \cite{watson1995treatise}, chapter XVIII. 
The main result needed for our purpose is \autoref{DecroissanceFourierBessel},  an equivalent statement of which can be found in to Theorem $1$ in \cite{tolstov2012fourier} chapter 8, section 20. 

In \autoref{FunctionalFramework}, we quickly recall some classical facts on the Laplacian eigenfunctions with Dirichlet boundary conditions on the unit ball and formulate a conjecture (\autoref{ConjCp}) for the normalization constant supported by strong numerical evidence (\autoref{figure:encadrementCp}). It is worth noting that the use Laplace eigenfunctions as the decomposition basis and the results we obtain hereafter do not rely on the space dimension. For example, in $\R^3$ the radial eigenvalues of the Laplacian are proportional to
\[  \bs{x} \mapsto \dfrac{J_{1/2}(2\pi p\abs{\bs{x}})}{\bs{|x|}^{1/2}}, \quad  p\in\N^*.\] 
Therefore, our approach generalizes \cite{Alouges2015} to any dimension.  
\subsection{Radial eigenvalues of the Laplace operator with Dirichlet conditions}
\label{FunctionalFramework}
In the following, $B$ denotes the unit ball in $\mathbb{R}^2$, and $\mathcal{C}$ its boundary. We say that a function $u\from\mathbb{R}^2\rightarrow \mathbb{R}$ is radial if there exists $\tilde{u}\from\mathbb{R}^+\to \R$ such that for any $\bs{x} \in \R^2$, 
\[ u(\bs{x}) = \tilde{u}(\abs{\bs{x}}).\] 
In this case, we use the notation $u(\bs{x})= u(r)$ where $\abs{\bs{x}} = r$. 
We note $\Lrad$ the closed subspace of $L^2$ that are radial functions and $\Crad = \enstq{\varphi \in \Cinf_c(B)}{\varphi \text{ is radial }}$. Similarly,
\[\Hrad = \enstq{ u \in \Lrad}{ \forall 1 \leq i \leq 2, \dfrac{\partial u}{\partial x_i} \in L^2(B)},\]
which is a Hilbert space for the norm
\[\norm{u}_{\Hrad}^2 = \int_{B}\abs{u}^2 + \abs{\nabla u}^2 = 2\pi\int_{0}^1 r \left(u(r)^2 + u'(r)^2\right) dr. \]
Finally, we note $\Hzrad$ the closure of $\Crad$ in $\Hrad$, with the norm
\[\norm{u}_{\Hzrad}^2 = {2\pi \int_{0}^1 r u'(r)^2dr}.\] 

We now briefly recall some facts on Bessel functions. All the results that we use on this topic can be found in the comprehensive book \cite{abramowitz1964handbook}, most of which can be consulted handily on the digital library \cite{NIST:DLMF}.

\begin{definition}
	\label{defJ0}
	The Bessel function of the first kind and order $\alpha$, $J_\alpha$ is defined by the following series: 
	\begin{equation}
		\label{J0powerSeries}
		J_\alpha(r) \isdef \sum_{m=0}^\infty \frac{(-1)^m}{m! \, \Gamma(m+1+\alpha)} {\bigg(\frac{r}{2}\bigg)}^{2m+\alpha}\!\!\!\!\!\!\!\!\!\!\!\!.
	\end{equation}
\end{definition}
\noindent $J_0$ is a $\Cinf$ solution of Bessel's differential equation 
\begin{equation}
	\label{BesselDifferentialEquation}
	r^2f''(r) + r f'(r) + r^2 f(r) = 0.
\end{equation}
The roots $(\rho_p)_{p \in \N^*}$ of $J_0$, behave, for large $p$, as 
\[ \rho_p \underset{p \to \infty}{\sim} \pi p.\]
More precisely, $\rho_p = \pi p - \frac{\pi}{4} + O\left(\frac{1}{p}\right)$, and
\begin{equation}
	\pi(p - 1/4)\leq \rho_p \leq \pi(p - 1/8).
	\label{EncadrementRhop}
\end{equation}

\noindent For any $p\in \N^*$, we introduce:
\[e_p(\bs{x}) = C_p J_0(\rho_p \abs{\bs{x}}),\]
where the normalization constant $C_p$ is chosen such that $\norm{e_p}_{\Hrad} = 1$, that is  
\[C_p = \dfrac{1}{\left(2\pi \int_B  r \rho_p^2 J_1 (\rho_p r)^2\right)^{1/2}} = \dfrac{1}{\sqrt{\pi}\rho_p\abs{J_1(\rho_p)}}.\]
\noindent For any $p \in \N^*$, $e_p$ satisfies:
\begin{equation}
	\label{epEstUnVP}
	-\Delta e_p = \rho_p^2 e_p.
\end{equation}
The following result is well-known.
\begin{Theorem} 
	\label{epEstUneBaseDeHilbert}
	The family $\left\{e_p, p\in \N^*\right\}$ is a Hilbert basis of $\Hzrad$.
\end{Theorem}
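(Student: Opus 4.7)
The plan is to establish the two defining properties of a Hilbert basis in $\Hzrad$: orthonormality and density of the linear span. The normalization $\norm{e_p}_{\Hzrad} = 1$ is built into the definition of the constants $C_p$. For pairwise orthogonality, I would invoke the eigenvalue equation \eqref{epEstUnVP} together with the fact that $e_p \in \Cinf(\overline{B})$ satisfies $e_p|_{\mathcal{C}} = C_p J_0(\rho_p) = 0$. Green's formula then gives
\[ (e_p, e_q)_{\Hzrad} = \int_B \nabla e_p \cdot \nabla e_q = -\int_B e_q \, \Delta e_p = \rho_p^2 \int_B e_p\, e_q, \]
and by symmetry the same inner product also equals $\rho_q^2 \int_B e_p\, e_q$. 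Since the roots $(\rho_p)$ are simple and mutually distinct, this forces $\int_B e_p\, e_q = 0$ for $p\neq q$, hence $(e_p, e_q)_{\Hzrad} = 0$.

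For completeness, my strategy is to invoke the spectral theorem for compact self-adjoint operators. I would define $T\from \Lrad \to \Lrad$ by $Tf = u$, where $u \in \Hzrad$ is the unique weak solution of $-\Delta u = f$ with homogeneous Dirichlet data; well-posedness is provided by Lax-Milgram, and radiality of $u$ follows from uniqueness combined with rotational invariance of the Laplacian. Then $T$ is bounded, self-adjoint because $(Tf,g)_{L^2} = \int_B \nabla Tf \cdot \nabla Tg = (f,Tg)_{L^2}$, positive, and compact thanks to the Rellich embedding $\Hzrad \hookrightarrow \Lrad$. The spectral theorem therefore delivers an orthonormal basis of $\Lrad$ consisting of eigenfunctions of $T$, each of which lies in $\Hzrad$ and satisfies $-\Delta v = \lambda v$ for some $\lambda > 0$.

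To identify these eigenfunctions with scalar multiples of the $e_p$, I would recast the radial equation $-\Delta v = \lambda v$ as $-(r v'(r))'/r = \lambda v(r)$ on $(0,1)$, which after the rescaling $r \mapsto \sqrt{\lambda}\, r$ is Bessel's equation \eqref{BesselDifferentialEquation}. The solutions bounded at the origin are multiples of $J_0(\sqrt{\lambda}\, r)$, and the Dirichlet condition $v(1)=0$ pins $\sqrt{\lambda}$ to the sequence $(\rho_p)$, so each radial Dirichlet eigenfunction is a scalar multiple of some $e_p$. Consequently $\{e_p\}_{p\geq 1}$ is, after $\Lrad$-renormalization, an orthonormal basis of $\Lrad$. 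To bridge to density in $\Hzrad$, I would take any $u \in \Hzrad$ with $(u,e_p)_{\Hzrad} = 0$ for every $p$, integrate by parts once more using $u|_{\mathcal{C}} = 0$ in the trace sense and $-\Delta e_p = \rho_p^2 e_p$, to deduce $\int_B u\, e_p = 0$ for all $p$, and conclude $u = 0$ from the $\Lrad$-basis property. The main obstacle in this outline is precisely the passage from $\Lrad$-completeness, which the spectral theorem provides directly, to $\Hzrad$-density; the two Dirichlet conditions (on $u$ and on $e_p$) make the integration-by-parts clean and close the argument.
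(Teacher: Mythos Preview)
Your argument is correct and follows the standard route via the spectral theorem for the inverse Dirichlet Laplacian. The paper, however, does not actually prove this theorem: it is stated as ``well-known'' immediately after the definition of the $e_p$, and no argument is given. So there is nothing to compare against; you have supplied precisely the kind of proof a reader would reconstruct, and each step (orthogonality via Green's formula, compactness and self-adjointness of the solution operator on $\Lrad$, identification of the radial eigenfunctions through Bessel's equation and exclusion of $Y_0$ by the $H^1$ integrability at the origin, and the final passage from $\Lrad$-completeness to $\Hzrad$-totality using $(u,e_p)_{\Hzrad}=\rho_p^2\int_B u\,e_p$) is sound.
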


One can check, using asymptotic expansions of Bessel functions, that
\begin{equation}
	\label{equivalentCp}
	C_p = \dfrac{1}{\sqrt{2 \pi p}} + O\left(\frac{1}{p^{3/2}}\right), 
\end{equation}
We will actually need a more precise knowledge on the constant $C_p$. We formulate the next conjecture, which seems hard to establish and for which we didn't find any element of proof in the literature. Numerical evidence exposed in \autoref{figure:encadrementCp} strongly suggests it is true. 
\begin{conjecture} 
	\label{ConjCp}
	For all $p \in \N^*$:
	\begin{equation}
		\label{EncadrementCp}
		\frac{1}{\sqrt{2\pi p}} \leq C_p \leq \frac{1}{\sqrt{2\pi (p-1/4)}}.
	\end{equation}
\end{conjecture}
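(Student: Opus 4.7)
Squaring the conjectured inequality and using the explicit formula $C_p = 1/(\sqrt{\pi}\rho_p\abs{J_1(\rho_p)})$, the conjecture is equivalent to the two-sided bound
\[ 2\bigl(p - \tfrac{1}{4}\bigr) \leq \rho_p^2 J_1(\rho_p)^2 \leq 2p. \]
The plan is therefore to produce a two-term asymptotic expansion of $\rho_p^2 J_1(\rho_p)^2$ with explicit (non-asymptotic) control of the remainder, and to reduce the conjecture to a finite verification for small $p$.

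The core ingredient is the classical Hankel asymptotic
\[ J_1(x) = \sqrt{\tfrac{2}{\pi x}}\bigl[P_1(x)\cos(x - \tfrac{3\pi}{4}) - Q_1(x)\sin(x - \tfrac{3\pi}{4})\bigr], \]
with $P_1(x) = 1 + 15/(128 x^2) + O(x^{-4})$ and $Q_1(x) = 3/(8x) + O(x^{-3})$, evaluated at $x = \rho_p$. Writing $\rho_p - \tfrac{3\pi}{4} = (p-1)\pi + \eta_p$ and using the refined McMahon expansion $\eta_p = 1/(8\rho_p) + O(\rho_p^{-3})$ (compatible with \eqref{EncadrementRhop}), a direct computation yields
\[ J_1(\rho_p)^2 = \frac{2}{\pi\rho_p}\left(1 + \frac{1}{8\rho_p^2} + R_p\right), \qquad \abs{R_p} \leq \frac{C}{\rho_p^3}, \]
for an explicit constant $C$. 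Multiplying by $\rho_p^2$ and substituting the analogous expansion for $\rho_p$, one arrives at
\[ \rho_p^2 J_1(\rho_p)^2 = 2p - \tfrac{1}{2} + \frac{1}{2\pi^2 p} + \tilde R_p, \qquad \abs{\tilde R_p} \leq \tilde C / p^2. \]
Both sides of the conjectured bound then amount to an elementary inequality on $p$ that holds as soon as $p$ exceeds some explicit threshold $p_0$ depending only on $C$ and $\tilde C$. The remaining cases $p < p_0$ are dispatched by high-precision (e.g.\ interval-arithmetic) evaluation of $\rho_p$ and $J_1(\rho_p)$.

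The main obstacle is making all the constants fully explicit. The lower bound $2(p - 1/4) \leq \rho_p^2 J_1(\rho_p)^2$ is asymptotically tight: the positive leading correction $1/(2\pi^2 p)$ is of the same order as the error terms one naively discards, so a loose estimate of $R_p$ or of the McMahon remainder immediately destroys the inequality. Obtaining a sharp bound on $R_p$ requires the integral representation of the remainder in Hankel's expansion, as developed for instance in Watson's monograph, and a matched enclosure of $\rho_p$ capturing the $O(1/p)$ McMahon correction — the looser bound $\pi(p - 1/8)$ in \eqref{EncadrementRhop} is not sufficient on its own. The upper bound $\rho_p^2 J_1(\rho_p)^2 \leq 2p$, by contrast, enjoys a slack of order $1$ and should follow from the same expansion with a crude remainder estimate.
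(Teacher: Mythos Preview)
The paper does not prove this statement: it is explicitly presented as a conjecture, supported only by numerical evidence (\autoref{figure:encadrementCp}), with the authors remarking that it ``seems hard to establish'' and that they ``didn't find any element of proof in the literature.'' There is therefore no proof in the paper to compare your proposal against.

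On its own merits, your reduction to $2(p-\tfrac14)\leq \rho_p^2 J_1(\rho_p)^2\leq 2p$ is correct, and the two-term expansion $\rho_p^2 J_1(\rho_p)^2 = 2p - \tfrac12 + \tfrac{1}{2\pi^2 p} + O(p^{-2})$ is consistent with the McMahon and Hankel asymptotics. You also correctly identify that the upper bound has slack of order $1$ while the lower bound is sharp to order $O(p^{-1})$, so that only a remainder estimate capturing the sign and size of the $1/(2\pi^2 p)$ correction can close the argument.

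That said, what you have written is a strategy, not a proof. The entire difficulty --- producing an explicit, rigorous bound on $R_p$ and $\tilde R_p$ that is smaller than $1/(2\pi^2 p)$ for all $p\geq p_0$ with an explicit $p_0$ --- is precisely the step you defer (``The main obstacle is making all the constants fully explicit''). Watson's integral representation of the Hankel remainder does give computable bounds, but these are typically of the form ``the remainder is bounded by the first omitted term,'' and one must check carefully that the combined effect of the errors in $P_1$, $Q_1$, $\cos\eta_p$, $\sin\eta_p$, and the McMahon remainder for $\rho_p$ does not swamp the $1/(2\pi^2 p)$ term. Until those constants are written down and the finite check for $p<p_0$ is actually performed (interval arithmetic or otherwise), the conjecture remains open, which is exactly the status the paper assigns it.
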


\begin{figure}[H]
	\centering
	\includegraphics[scale = 1]{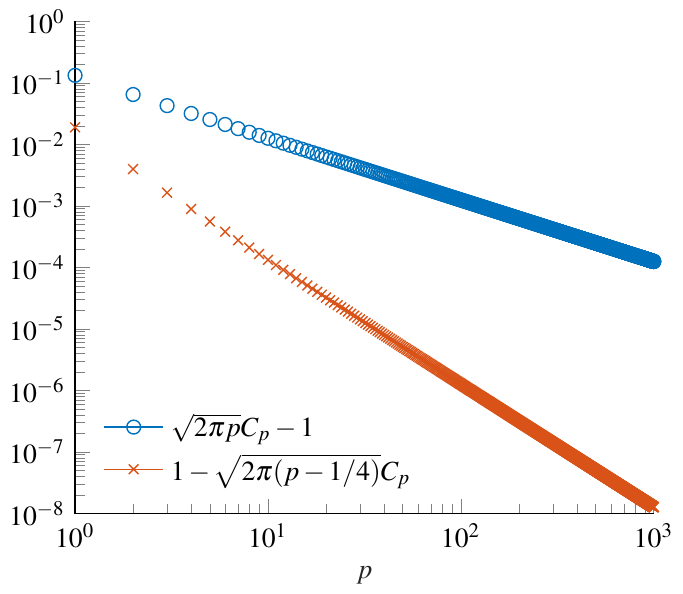}
	\caption{Illustration of \autoref{ConjCp} with numerical values of the first $1000$ terms of the sequence ${v_p = \sqrt{2\pi p}C_p - 1}$ (blue circles) and ${w_p = 1 - \sqrt{2\pi(p-1/4)}C_p}$ (red crosses) in log-log scale}
	\label{figure:encadrementCp}
\end{figure}

\subsection{Truncature error for Fourier-Bessel series of smooth functions}
\label{FourierBesselTruncError}
We now introduce the Fourier-Bessel series and prove a bound for the norm of the remainder. 
In \autoref{epEstUneBaseDeHilbert}, we have shown that any function $f \in \Hzrad$ can be expanded through its so-called Fourier-Bessel series as
\[f = \sum_{p\in \mathbb{N}^*}c_p(f)e_{p}.\]
The generalized Fourier coefficients are obtained by the orthonormal projection: 
\[c_p(f) = \displaystyle \int_B \nabla f(\bs{x}) \cdot \nabla e_{p}(\bs{x}) d\bs{x} = \rho_k^2 \int_{B}{f(\bs{x}) e_p(\bs{x})d\bs{x}}.\]
Most references on this topic focus on proving pointwise convergence of the series even for not very regular functions $f$ (e.g. piecewise continuous, square integrable, etc.) \cite{stempak2002convergence,guadalupe1993mean,Balodis1999,colzani1993equiconvergence}.  In such cases, the Fourier-Bessel series may exhibit a Gibb's phenomenon \cite{wilton1928gibbs}. On the contrary here, we need to establish that the Fourier-Bessel series of very smooth functions converges exponentially fast. To this aim, we first introduce the following terminology: 
\begin{definition}
	We say that a radial function $f$ satisfies the multi-Dirichlet condition of order $n \in \N^*$ if $f$ is $H^{2n}$ in a neighborhood of $\mathcal{C}$ and if for all $s \leq n-1$, the $s-th$ iterate of $-\Delta$ on $f$, denoted by $(-\Delta)^s f$, vanishes on $\mathcal{C}$ (with the convention $(-\Delta)^0 f = f$). 
\end{definition}
\begin{proposition} 
	\label{DecroissanceFourierBessel}
	If $f \in H^{2n}(B)$ satisfies the multi-Dirichlet condition of order $n$, then for any $p \in \mathbb{N}^*$:
	\[ c_p(f) = \dfrac{1}{\rho_{p}^{2n-2}} \int_{B}\left(-\Delta\right)^{n} f(\bs{x})e_p(\bs{x}) d\bs{x}.\] 
\end{proposition}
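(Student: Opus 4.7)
The plan is a direct induction on $n$, powered by Green's second identity, exploiting the fact that at every step one of the two boundary values vanishes: either $e_p$ on $\mathcal{C}$ (always), or $(-\Delta)^s f$ on $\mathcal{C}$ (by the multi-Dirichlet assumption, as long as $s \leq n-1$).

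First, I would dispatch the case $n = 1$, which is already built into the definition of $c_p(f)$: for $f \in \Hzrad$ one integrates by parts once, uses $e_p = 0$ on $\mathcal{C}$ to kill the boundary term, and applies \eqref{epEstUnVP}, giving
\[c_p(f) = \int_B \nabla f \cdot \nabla e_p \, d\bs{x} = \int_B f \, (-\Delta e_p) \, d\bs{x} = \rho_p^2 \int_B f \, e_p \, d\bs{x}.\]
When $f$ satisfies the multi-Dirichlet condition of order $1$, a second integration by parts (now using $f = 0$ on $\mathcal{C}$ to kill the other boundary term) rewrites this as $\int_B (-\Delta f)\, e_p\, d\bs{x}$, which is the claim for $n=1$.

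Second, I would set up the induction by proving the following intermediate identity for every $s \in \{0, \dots, n\}$:
\[\int_B (-\Delta)^s f \cdot e_p \, d\bs{x} = \rho_p^{2s} \int_B f \, e_p \, d\bs{x}.\]
The case $s = 0$ is trivial. For the inductive step, let $u_s \isdef (-\Delta)^s f$; since $f \in H^{2n}(B)$, we have $u_s \in H^{2(n-s)}(B) \subset H^2(B)$ for $s \leq n-1$, and the multi-Dirichlet assumption gives $u_s = 0$ on $\mathcal{C}$. Applying Green's second identity to $u_s$ and $e_p$,
\[\int_B \bigl[(-\Delta u_s)\, e_p - u_s (-\Delta e_p)\bigr]\, d\bs{x} = \int_{\mathcal{C}} \bigl(u_s \, \partial_n e_p - e_p \, \partial_n u_s\bigr)\, d\sigma,\]
the right-hand side vanishes (both $u_s$ and $e_p$ are zero on $\mathcal{C}$), and \eqref{epEstUnVP} rewrites the left-hand side to give
\[\int_B (-\Delta)^{s+1} f \cdot e_p\, d\bs{x} = \rho_p^2 \int_B (-\Delta)^s f \cdot e_p \, d\bs{x},\]
closing the induction.

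Third, I would specialize to $s = n$ and combine with the $n=1$ formula to conclude:
\[c_p(f) = \rho_p^2 \int_B f \, e_p \, d\bs{x} = \rho_p^{\,2 - 2n} \int_B (-\Delta)^n f \cdot e_p \, d\bs{x} = \dfrac{1}{\rho_p^{2n-2}} \int_B (-\Delta)^n f \cdot e_p\, d\bs{x}.\]
No analytic obstacle is really expected; the only delicate point is bookkeeping the regularity, making sure each Green's identity is applied to a pair of functions in $H^2(B)$ with well-defined $H^{1/2}$ and $H^{-1/2}$ traces on $\mathcal{C}$. The $H^{2n}$ hypothesis is exactly what lets the induction run through all $n$ steps, and the multi-Dirichlet condition is precisely designed so that each intermediate boundary term disappears.
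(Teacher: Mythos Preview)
Your proof is correct and follows essentially the same route as the paper's: an induction driven by Green's identity, where at each step the boundary terms disappear because $e_p$ vanishes on $\mathcal{C}$ and $(-\Delta)^s f$ vanishes on $\mathcal{C}$ for $s\leq n-1$. The paper organizes the induction on the order $n$ of the multi-Dirichlet condition, whereas you fix $n$ and induct on the number $s$ of Laplacians applied, but the mechanics of each step are identical; your version is slightly more explicit about the regularity bookkeeping, and the paper's base case reaches $\int_B(-\Delta f)\,e_p$ in one integration by parts (using only $e_p|_{\mathcal{C}}=0$) rather than two.
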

\begin{proof}
	If $f$ satisfies the multi-Dirichlet condition of order $n=1$, then by integration by parts:
	\[c_p(f) = \int_{B}(-\Delta)f(\bs{x}) e_p(\bs{x})d\bs{x},\]
	since $e_p$ vanishes on $\mathcal{C}$.
	Assume the result is true for some $n \geq 1$ and let $f$ satisfy the multi-Dirichlet condition of order $n+1$. Then, using the fact that $e_p$ is an eigenvector of $-\Delta$ associated to the eigenvalue $\rho_p^2$ we obtain:
	\[c_p(f) = \frac{1}{\rho_p^{2n}}\int_{B}(-\Delta)^{n}f(\bs{x})~ (-\Delta) e_p(\bs{x}) d\bs{x}.\]
	The result follows from integration by parts where we successively use that $(-\Delta)^{n}f$ and $e_p$ vanish on $\mathcal{C}$.
	
\end{proof}

\begin{corollary} If $f \in H^{2n}(B)$ satisfies the multi-Dirichlet condition of order $n$, there exists a constant $C$ independent of the function $f$ such that for all $p \in \mathbb{N^*}$, 
	\[ |c_p(f)| \leq  C \dfrac{\norm{(-\Delta)^n f}_{\Lrad}}{(\pi p)^{2n-1}}.\] 
\end{corollary}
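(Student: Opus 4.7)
The plan is to start from the integral representation furnished by \autoref{DecroissanceFourierBessel}, apply Cauchy--Schwarz, and then control the $L^2$ norm of $e_p$ together with the growth of $\rho_p$.

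First, by \autoref{DecroissanceFourierBessel},
\[
|c_p(f)| \;=\; \frac{1}{\rho_p^{2n-2}}\left|\int_B (-\Delta)^n f(\bs{x})\, e_p(\bs{x})\, d\bs{x}\right| \;\leq\; \frac{1}{\rho_p^{2n-2}}\,\norm{(-\Delta)^n f}_{\Lrad}\,\norm{e_p}_{\Lrad},
\]
so the task reduces to bounding $\norm{e_p}_{\Lrad}$ and relating $\rho_p$ to $\pi p$.

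Second, I would compute $\norm{e_p}_{\Lrad}$ using the eigenvalue identity \eqref{epEstUnVP}. Since $e_p$ vanishes on $\mathcal{C}$, an integration by parts gives
\[
1 \;=\; \norm{e_p}_{\Hrad}^2 \;=\; \int_B |\nabla e_p|^2 \;=\; \int_B e_p\,(-\Delta) e_p \;=\; \rho_p^2 \norm{e_p}_{\Lrad}^2,
\]
whence $\norm{e_p}_{\Lrad} = 1/\rho_p$. (Strictly speaking $\norm{e_p}_{\Hrad}^2 = \norm{e_p}_{\Lrad}^2 + \norm{\nabla e_p}_{\Lrad}^2$, but the normalization used in the paper right after \autoref{epEstUneBaseDeHilbert} is the $\Hzrad$ seminorm; in either case the same computation yields $\norm{e_p}_{\Lrad} = O(1/\rho_p)$ with an absolute constant.) Substituting back,
\[
|c_p(f)| \;\leq\; \frac{\norm{(-\Delta)^n f}_{\Lrad}}{\rho_p^{2n-1}}.
\]

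Finally, I would convert $\rho_p$ to $\pi p$ using the lower bound $\rho_p \geq \pi(p-1/4)$ from \eqref{EncadrementRhop}. For $p \geq 1$, $p/(p-1/4) \leq 4/3$, so
\[
\frac{1}{\rho_p^{2n-1}} \;\leq\; \left(\frac{4}{3}\right)^{2n-1}\frac{1}{(\pi p)^{2n-1}},
\]
yielding the announced estimate with $C = (4/3)^{2n-1}$, a constant that depends only on $n$ and in particular is independent of $f$. There is no real obstacle here, as the only nontrivial ingredients are \autoref{DecroissanceFourierBessel} (already proved) and the two elementary bounds on $\norm{e_p}_{\Lrad}$ and $\rho_p$.
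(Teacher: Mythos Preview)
Your argument is correct and mirrors the paper's proof: apply \autoref{DecroissanceFourierBessel}, use Cauchy--Schwarz, observe $\norm{e_p}_{\Lrad}=1/\rho_p$ from the normalization of $e_p$ as a Laplace eigenfunction, and finish with $\rho_p \sim \pi p$. The slight hesitation you flag about $\Hrad$ versus $\Hzrad$ normalization is an ambiguity in the paper itself; the proof there simply asserts ``unit norm in $\Hzrad$'' and draws the same conclusion $\norm{e_p}_{\Lrad}=1/\rho_p$, so you are in full agreement.
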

\noindent Notice that this is similar to the fact that the Fourier coefficients of smooth functions decay fast. 
\begin{proof}
	We apply the result of the previous proposition and remark that since $e_p$ is an eigenfunction of the Laplace operator with unit norm in $\Hzrad$, $\norm{e_p}_{\Lrad} = \dfrac{1}{\rho_p}\norm{e_p}_{\Hzrad} = \dfrac{1}{\rho_p}$. To conclude, recall $\rho_{p} \sim p\pi$ for large $p$.
	
\end{proof}			
\begin{corollary} Let the remainder be defined as 
	\[R_P(f) = \displaystyle\sum_{p = P+1}^{+\infty} c_{p}(f) e_{p}.\]
	If $f \in H^{2n}(B)$ satisfies the multi-Dirichlet condition of order $n$, there exists a constant $C$ independent of $n$ and $P$ such that: 
	\[\norm{R_P(f)}_{\Hzrad} \leq C\dfrac{\norm{(-\Delta)^n f}_{\Lrad}}{(\pi P)^{2n}}\sqrt{\dfrac{P^3}{n}}.\]
	\label{EstimationRest}
\end{corollary}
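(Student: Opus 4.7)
The plan is to combine the $\Hzrad$-orthonormality of $(e_p)_{p \in \N^*}$ with the coefficient formula of \autoref{DecroissanceFourierBessel}, and then sum the tail via an integral comparison. Since $(e_p)$ is a Hilbert basis of $\Hzrad$, Parseval's identity gives
\[\norm{R_P(f)}_{\Hzrad}^2 = \sum_{p=P+1}^{+\infty}\abs{c_p(f)}^2,\]
so the task reduces to bounding this tail.

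For each $p > P$, I would use \autoref{DecroissanceFourierBessel} to write $c_p(f) = \rho_p^{-(2n-2)}\int_B (-\Delta)^n f \cdot e_p\, d\bs{x}$, apply the Cauchy--Schwarz inequality, and exploit the fact that $e_p$ is an eigenfunction of $-\Delta$ with eigenvalue $\rho_p^2$ and unit $\Hzrad$-norm, so that $\norm{e_p}_{\Lrad}=1/\rho_p$. This yields
\[ \abs{c_p(f)}^2 \leq \frac{\norm{(-\Delta)^n f}_{\Lrad}^2}{\rho_p^{4n-2}}.\]

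The remaining step is to estimate $\sum_{p>P}\rho_p^{-(4n-2)}$. Using $\rho_p \geq \pi(p-1/4)$ from \eqref{EncadrementRhop} and comparing the decreasing summand to an integral, I would obtain
\[ \sum_{p=P+1}^{\infty} \rho_p^{-(4n-2)} \leq \frac{(P-1/4)^{-(4n-3)}}{\pi^{4n-2}(4n-3)} = \frac{\pi^{2}(P-1/4)^3}{(4n-3)\bigl(\pi(P-1/4)\bigr)^{4n}}.\]
Absorbing the ratio $\bigl(P/(P-1/4)\bigr)^{4n}$ into an absolute constant (valid for $P$ bounded away from $0$) gives a bound of the form $C P^3/\bigl[n(\pi P)^{4n}\bigr]$, and taking the square root produces the announced estimate.

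The only place requiring care is this last rearrangement: the prefactor $\sqrt{P^3/n}$ emerges precisely because the exponent $4n-3$ in the integral tail supplies both the $1/(4n-3)$---hence the $1/\sqrt{n}$ after taking the square root---and an extra power $P^3$ coming from rewriting $(\pi P)^{-(4n-3)}=P^3(\pi P)^{-4n}$. All remaining pieces---Parseval's identity, Cauchy--Schwarz, and the computation $\norm{e_p}_{\Lrad}=1/\rho_p$---are immediate.
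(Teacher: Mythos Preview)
Your approach mirrors the paper's almost exactly: Parseval's identity, the coefficient formula from \autoref{DecroissanceFourierBessel} together with Cauchy--Schwarz and $\norm{e_p}_{\Lrad}=1/\rho_p$, then an integral comparison for the tail sum. The paper routes the middle step through its preceding corollary (the bound $|c_p(f)|\leq C\norm{(-\Delta)^n f}_{\Lrad}/(\pi p)^{2n-1}$) rather than keeping $\rho_p$ explicit, but the substance is identical.

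There is, however, a genuine problem in your final step. You claim that the factor $\bigl(P/(P-1/4)\bigr)^{4n}$ can be absorbed into a constant independent of $n$ and $P$. For $P=1$ this factor equals $(4/3)^{4n}$, which diverges as $n\to\infty$; for any fixed $P$ it grows exponentially in $n$. So the absorption is illegitimate as stated. The paper's own argument shares this defect: passing from $1/\rho_p^{2n-1}$ to $C/(\pi p)^{2n-1}$ in the preceding corollary costs a factor $(\pi p/\rho_p)^{2n-1}$, and since $\rho_p<\pi p$ (indeed $\pi/\rho_1\approx 1.31$) this too is unbounded in $n$. The inequality with $C$ truly independent of both $n$ and $P$ does not follow from either argument without a further restriction. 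In the regime $n=O(P)$ --- which is exactly how the estimate is used in \autoref{The:DecroissanceErreurProlongementPoly}, where $n\approx aP\pi/64$ --- one has $\bigl(P/(P-1/4)\bigr)^{4n}\leq e^{4n/(4P-1)}=O(1)$, and then your argument (and the paper's) is valid.
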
																												
\begin{proof}
	Parseval's identity implies
	\[\norm{R_P(f)}_{\Hzrad}^2 = \sum_{p=P+1}^{+\infty}|c_p(f)|^2.\]																																			
	According to the previous results, we find that:
	\[\norm{R_P(f)}_{\Hzrad} \leq C \norm{(-\Delta)^n f}_{\Lrad}\sqrt{\sum_{p= P+1}^{+\infty} \dfrac{1}{(\pi p)^{4n-2}}}.\]
	The announced result follows from $\displaystyle\sum_{p > P} \frac{1}{p^{\alpha}} \propto \frac{1}{(\alpha - 1)P^{\alpha-1}}$ for $\alpha > 1$. 
	
\end{proof}

\subsection{Other boundary conditions}
\label{Robin}
When we replace the Dirichlet boundary condition by the following Robin boundary conditions
\begin{equation}
	\label{robinCondition}
	\dfrac{\partial u}{\partial n} + H u = 0
\end{equation}
for some constant $H \geq 0$, the same analysis can be conducted, leading to Dini series (also covered in \cite{watson1995treatise}). This time, we construct a Hilbert basis of $\Hrad$ with respect to the bilinear form
\[a_H(u,v) \isdef \int_{B} \nabla u(\bs{x} ) \cdot \nabla v(\bs{x} ) d\bs{x} + H \int_{\mathcal{C}}{u(\bs{x} )v(\bs{x} )}d\sigma(\bs{x} ).\]
The following result holds. 
\begin{Theorem}
	Let $(\rho_p^H)_{p \in \N*}$ the sequence of positive solutions of
	\[r J_0'(r) + H J_0(r) = 0.\]
	\begin{itemize}
		\item[(i)] If $H>0$, the functions 
		\[e_p^H(r) = C_p J_0(\rho_p^H r),\]
		with $C_p$ such that $a_H(e_p^H,e_p^H) = 1$, form a Hilbert basis of $H^1_{\text{rad}}(B)$. 
		\item[(ii)]If $H = 0$, a constant function must be added to the previous family to form a complete set. 
	\end{itemize}
\end{Theorem}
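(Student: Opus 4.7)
The proof follows the classical variational approach to spectral problems, and the structure closely parallels the Dirichlet case treated earlier in the section, with the key changes being the function space and the boundary handling.

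First I would set up the variational framework. The bilinear form $a_H$ is clearly symmetric and continuous on $H^1_{\text{rad}}(B)\times H^1_{\text{rad}}(B)$ thanks to the trace theorem. For $H>0$, the main analytic step is to establish coercivity of $a_H$ on $H^1_{\text{rad}}(B)$, i.e.~a Poincaré-type inequality of the form
\[ \|u\|_{L^2(B)}^2 \leq C\Bigl( \|\nabla u\|_{L^2(B)}^2 + \|u\|_{L^2(\mathcal{C})}^2\Bigr), \qquad u \in H^1_{\text{rad}}(B). \]
This is standard (by contradiction and Rellich compactness) and implies that $a_H$ defines an equivalent Hilbert norm on $H^1_{\text{rad}}(B)$. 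For $H=0$, $a_0$ is degenerate on constants, and one obtains coercivity only on the closed subspace $V_0 \subset H^1_{\text{rad}}(B)$ of functions with zero mean, again via Poincaré-Wirtinger.

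Next I would invoke spectral theory for compact self-adjoint operators. By Lax--Milgram (for $H>0$) or its version on $V_0$ (for $H=0$), one defines a solution operator $T\from \Lrad \to \Lrad$ by $a_H(Tf,v) = \langle f,v\rangle_{\Lrad}$ for all test $v$ (with the obvious projection onto $V_0$ when $H=0$). Since $H^1_{\text{rad}}(B) \hookrightarrow \Lrad$ is compact (restriction of Rellich's theorem to the closed radial subspace), $T$ is compact, self-adjoint, and positive. The spectral theorem produces an $\Lrad$-orthonormal basis of eigenfunctions, which by construction form an $a_H$-orthogonal family spanning $H^1_{\text{rad}}(B)$; after renormalization through $C_p$, this basis is $a_H$-orthonormal. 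For $H=0$, the constants span the kernel of $T$ and must be added to get a complete family in $H^1_{\text{rad}}(B)$.

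It remains to identify the eigenfunctions. An eigenfunction $u$ satisfies $-\Delta u = \lambda u$ in $B$ with $\partial_n u + H u = 0$ on $\mathcal{C}$ and is radial; writing $u(r)$, this yields Bessel's equation $r^2 u'' + r u' + \lambda r^2 u = 0$, whose only solution regular at $r=0$ is (up to a constant) $J_0(\sqrt{\lambda}\,r)$. Setting $\rho = \sqrt{\lambda}$, the Robin condition at $r=1$ becomes exactly $\rho J_0'(\rho) + H J_0(\rho) = 0$, so the admissible $\rho$ are precisely the positive roots $\rho_p^H$. Positivity of $T$ ensures $\lambda > 0$ when $H>0$; for $H=0$, $\rho=0$ is also a root, corresponding to the constant eigenfunctions. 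The normalization $C_p$ is then fixed by $a_H(e_p^H,e_p^H)=1$.

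The main obstacle is the coercivity step: one must check that the Poincaré trace inequality above survives the restriction to the radial subspace, and that the spectrum of $T$ is simple (so that the $\rho_p^H$ indeed exhaust all eigenvalues with no hidden multiplicity). Simplicity follows from the uniqueness of the regular-at-zero solution of the ODE, so no genuine difficulty arises there; the analytic content truly lies in the Rellich/Poincaré ingredients. Everything else is a mechanical transcription of the Dirichlet proof of \autoref{epEstUneBaseDeHilbert}.
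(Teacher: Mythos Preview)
The paper does not actually prove this theorem: it is stated in \autoref{Robin} without proof, in the same spirit as \autoref{epEstUneBaseDeHilbert} for the Dirichlet case, which is simply declared ``well-known'' with a reference to Watson's treatise. So there is no paper proof to compare against.

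Your argument is the standard one and is correct. The variational setup (coercivity of $a_H$ via a Poincar\'e--trace inequality for $H>0$, Poincar\'e--Wirtinger on the mean-zero subspace for $H=0$), the compact self-adjoint solution operator, and the identification of the radial eigenfunctions through Bessel's ODE with the Robin condition at $r=1$ are all sound. The only cosmetic remark is that for $H=0$ the constant function is not in the kernel of the solution operator $T$ on $V_0$ (it is excluded from $V_0$); rather, it spans the $a_0$-orthogonal complement of $V_0$ in $H^1_{\text{rad}}(B)$ and is adjoined at the end---which is in fact what you do, but the phrasing ``the constants span the kernel of $T$'' is slightly off. This does not affect the validity of the argument.
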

\noindent It can be checked that the truncature error estimates in \autoref{EstimationRest} extend to this case, for functions satisfying multi-Robin conditions of order $n \geq 1$, that is for all $s\leq n-1$, $(-\Delta)^s u$ satisfies \eqref{robinCondition}.

\section{Sparse Bessel Decomposition}

\label{sec:SBD}
\subsection{Definition of the SBD}
										
Consider the kernel $G$ involved in \eqref{discreteConv}. We can assume up to rescaling $G$ that the diameter $\rmax$ of $\bs{z}$ is bounded by $1$, and therefore, we need to approximate $G$ only on the unit ball $B$. 
If we wish to approximate $G$ in series of Bessel functions, two kinds of complications are encountered:
\begin{itemize}
	\item[(i)] $G$ is usually singular near the origin, therefore not in $H^{2n}(B)$ (even for $n=1$). 
	\item[(ii)] The multi-Dirichlet conditions may not be fulfilled up to a sufficient order.
\end{itemize}

The point (ii) is crucial in order to apply the error estimates of the previous section. The first two kernels that we will study (Laplace and Helmholtz) satisfy the favorable property:
\[\Delta G = \lambda G\]
for some $\lambda \in \mathbb{C}$, which will be helpful to ensure (ii) at any order. For more general kernel, we propose in \autoref{begal1} a simple trick to enforce multi-Dirichlet conditions up to a given order. 

As for point (i), we will use the following method: for the approximation 
\[G \approx \sum_{p = 1}^P \alpha_p e_p,\]
we know by \autoref{epEstUneBaseDeHilbert} that the minimal $H^1_0$ error on $B$ is reached for $\alpha_p = c_p(G)$. However, if the closest interaction in \eqref{discreteConv} are computed explicitly, it can be sufficient to approximate $G$ in a domain of the form $a \leq r \leq 1$ for some $a$. For this reason, we propose to define the coefficients $(\alpha_1,\cdots,\alpha_P)$ as the minimizers of the quadratic form
\[ Q^P(t_1,t_2,...,t_P) = \bigintsss_{\mathcal{A}(a)} \left|\nabla \!\left( G(\bs{x}) - \sum_{p=1}^P t_p J_0(\rho_p |\bs{x}|)\right)\right|^2d\bs{x},\]
where $\mathcal{A}(a)$ is the annulus $\enstq{\bs{x} \in \R^2}{a < \abs{\bs{x}} < 1}$. In the sequel, those coefficients will be called the SBD coefficients of $G$ of order $P$. Obviously, for any radial function $\tilde{G}$ defined on $B$ that coincides with $G$ on $\mathcal{A}(a)$, one has 
\[ Q^P(\alpha_1,\cdots,\alpha_P) \leq \bigintsss_{B} \left|\nabla \!\left( \tilde{G}(\bs{x}) - \sum_{p=1}^P c_p(\tilde{G}) J_0(\rho_p |\bs{x}|)\right)\right|^2d\bs{x}. \]
In particular, when $\tilde{G}$ is smooth up to the origin, this gives an error estimate via \autoref{EstimationRest}. If we choose a sufficiently high value for $a$, we ensure that smooth enough extensions exist, and ensure fast decay of the coefficients. 			
\begin{Remark}
	\label{RemarkPotts}
	The SBD weights do not depend on any specific extension of $G$ outside the annulus. Therefore, they provide the sparsest approximation one can expect, contrary the the usual approach where an explicit regularization $\tilde{G}$ of the kernel is constructed and the coefficients $c_p(\tilde{G})$ are used (see, for example \cite{potts2004fast}). 
\end{Remark}									
The next result shows that the $H_0^1$ norm on $\mathcal{A}(a)$ controls the $L^{\infty}$ norm, thus ruling out any risk of Gibb's phenomenon.
\begin{lemma}
	Let $a\in (0,1)$ and $e\in H^1_{\text{rad}}(\mathcal{A}(a))$ that vanishes on $\mathcal{C}$. 
	Then $e$ coincides almost everywhere with a continuous function with
	\[\abs{e(\bs{x}_0)} \leq \sqrt{\dfrac{-\log \abs{\bs{x}_0}}{2\pi}}\sqrt{\int_{\mathcal{A}(a)} \abs{\nabla e(\bs{x})}^2} d\bs{x},\quad  \text{almost for all }\bs{x}_0 \in \mathcal{A}(a).\]
\end{lemma}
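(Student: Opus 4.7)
The plan is to exploit radial symmetry in order to reduce the statement to a one-dimensional problem and then apply the fundamental theorem of calculus together with a weighted Cauchy--Schwarz inequality.

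First, since $e$ is radial, write $e(\bs{x}) = \tilde{e}(r)$ with $r = \abs{\bs{x}}$. I would begin by treating the smooth case $e \in C^\infty(\overline{\mathcal{A}(a)})$ with $\tilde{e}(1) = 0$, and conclude by density. The radial structure shows that $\tilde{e}$ lies in $H^1((a,1); r\,dr)$, i.e.
\[ \int_a^1 r \abs{\tilde{e}'(r)}^2 \, dr = \frac{1}{2\pi} \int_{\mathcal{A}(a)} \abs{\nabla e(\bs{x})}^2 \, d\bs{x} < \infty. \]
In the one-dimensional variable $r$ on the interval $(a,1)$, the function $\tilde{e}$ is absolutely continuous (since $r$ is bounded away from $0$ on this interval, and a weighted $H^1$ in 1D still embeds into continuous functions). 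This is what supplies the continuous representative of $e$.

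Next, for any $r_0 \in (a,1)$, use the boundary condition at $r=1$ and the fundamental theorem of calculus:
\[ \tilde{e}(r_0) = -\int_{r_0}^1 \tilde{e}'(r) \, dr = -\int_{r_0}^1 \frac{1}{\sqrt{r}} \cdot \sqrt{r}\,\tilde{e}'(r) \, dr. \]
By the Cauchy--Schwarz inequality,
\[ \abs{\tilde{e}(r_0)}^2 \leq \left( \int_{r_0}^1 \frac{dr}{r} \right) \left( \int_{r_0}^1 r \abs{\tilde{e}'(r)}^2 \, dr \right) \leq (-\log r_0)\int_a^1 r\abs{\tilde{e}'(r)}^2 \, dr. \]
Using the identity relating the weighted 1D integral to the 2D Dirichlet integral, this becomes
\[ \abs{\tilde{e}(r_0)}^2 \leq \frac{-\log r_0}{2\pi} \int_{\mathcal{A}(a)} \abs{\nabla e(\bs{x})}^2 \, d\bs{x}, \]
which is precisely the claimed bound after taking square roots and noting $r_0 = \abs{\bs{x}_0}$.

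Finally, I would extend the pointwise inequality from smooth functions to arbitrary $e \in H^1_{\text{rad}}(\mathcal{A}(a))$ vanishing on $\mathcal{C}$ by density: approximate $e$ in the $H^1$ norm by smooth radial functions vanishing on $\mathcal{C}$, note that the pointwise estimate passes to the limit uniformly on compact subsets of $(a,1)$ (since the right-hand side is locally bounded in $\bs{x}_0$), yielding a continuous limit function that agrees with $e$ a.e. The main subtlety, and the only non-routine point, is the justification that a radial $H^1$ function on the annulus admits a continuous representative: this follows from the fact that radiality reduces us to a one-dimensional $H^1$ problem on $(a,1)$, where the classical embedding $H^1 \hookrightarrow C^0$ applies since $a>0$ keeps the weight $r$ bounded below. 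Once this is in place, the Cauchy--Schwarz computation above is immediate.
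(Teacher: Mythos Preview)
Your proof is correct and follows essentially the same approach as the paper: reduce to smooth radial functions by density, use the fundamental theorem of calculus from $r_0$ to $1$ together with the boundary condition $\tilde{e}(1)=0$, and apply Cauchy--Schwarz with the weight splitting $1/\sqrt{r}\cdot\sqrt{r}$. If anything, you are more explicit than the paper about the continuous-representative point and the passage to the limit.
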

\begin{proof}
	It is sufficient to show the inequality for smooth $e$, the general result following by density. Let $\bs{x}_0 \in \mathcal{A}(a)$, we have, since $e(1)=0$:
	\begin{align}
		\abs{e(\bs{x}_0)} & \leq \int_{\abs{\bs{x}_0}}^1 \abs{e'(r)}dr,                                                                     \\
		                  & \leq \sqrt{2\pi \int_{\abs{\bs{x}_0}}^1 r \abs{e'(r)}^2 dr} \sqrt{\int_{\abs{\bs{x}_0}}^1 \frac{1}{2\pi r} dr}.
	\end{align}	
					
\end{proof}

\subsection{Numerical computation of the SBD}
\label{sub:Chol}
																		
For a given kernel $G$, the SBD coefficients $\alpha_p$ are obtained numerically by solving the following linear system: 
\begin{equation}
\begin{split}
\sum_{q = 1}^P \left(\int_{\mathcal{A}(a)} \rho_ q J_1(\rho_p|\bs{x}|) J_1(\rho_q|\bs{x}|) d\bs{x}\right) \alpha_q \\
\quad = -\int_{\mathcal{A}(a)} G'(\bs{x}) J_1(\rho_p|\bs{x}|)d\bs{x}, \quad 1\leq p \leq P,
\end{split}	
\label{LinearSystem}
\end{equation}
Where $J_1$ is the Bessel function of first kind and order $1$ (in fact, $J_0' = - J_1$). We solve this system for increasing values of $P$ until a required tolerance is reached. It turns out that the matrix $A^P$ whose entries are given by
\[ A^P_{k,l} = \int_{\mathcal{A}(a)} \!\!\!\!\!\!\!\! \nabla e_k \cdot \nabla e_l,\quad k,l \in \{1,\cdots,P\},\]
is explicit: for $(i,j) \in \{1,\cdots,P\}^2$, the non-diagonal entries of $A^P$ are
\begin{equation*}
	A_{i,j} = \frac{2\pi C_i C_j \rho_i \rho_j}{\rho_j^2 - \rho_i^2}\bigg[F_{i,j}(1) - F_{j,i}(1) - F_{i,j}(a) + F_{j,i}(a)\bigg],
\end{equation*}
where 
\[	 F_{i,j}(r) =  \rho_i r J_0(\rho_i r)J_0'(\rho_j r),\]
while the diagonal entries are
\begin{equation*}
	A_{i,i} = 2\pi C_i^2 \big(F_i(1) - F_i(a)\big),
\end{equation*}
where 
\[F_i(r) = \rho_i^2r^2\left[\dfrac{1}{2}J_0(\rho_ir)^2 + \frac{1}{2}J_0'(\rho_ir)^2\right] + \rho_irJ_0(\rho_i r)J_0'(\rho_ir).\]
Those formulas are obtained using Green's formulas together with the fact that $e_k$ are eigenfunctions of the Laplace operator. They are valid for any value of $\rho_k$ (not just the roots of $J_0$).
																		
\subsection{Conditioning of the linear system}
\newcommand{\Pa}{\gamma}
\newcommand{\Pastar}{\Pa^*}
The conditioning of $A$ seems to depend almost exclusively on the parameter $\Pa \isdef Pa$. We were only able to derive an accurate estimate of the conditioning of $A$ when $\Pa$ is small enough. For large $\Pa$, we will show some numerical evidence for a conjectured bound on the conditioning of $A$.

\subsubsection*{Conditioning of $A$ for small $\Pa$}
\begin{Theorem} If \autoref{ConjCp} is true, then, for $b=1$, the eigenvalues of $A$ lie in the interval~ ${[F(\Pa) - \frac{\pi^4}{144}\frac{\gamma^4}{P},1]}$ where
	\[F(\Pa) = 1 - \int_{0}^{\pi \gamma} \frac{t}{2}(J_1(t)^2 - J_0(t)J_2(t) )dt.\]
	\label{The:lowBoundCon}
\end{Theorem}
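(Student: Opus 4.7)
The plan is to write $A$ as the identity minus a positive semidefinite Gram matrix, then bound the latter by its trace. Since $\{e_p\}_{p\geq 1}$ is orthonormal in $\Hzrad$ (\autoref{epEstUneBaseDeHilbert}) and $B = \mathcal{A}(a) \sqcup B(0,a)$, one has
\[A = I - M, \qquad M_{kl} \isdef \int_{B(0,a)} \nabla e_k \cdot \nabla e_l.\]
The positive semidefiniteness of $M$ immediately yields the upper bound $\lambda(A) \leq 1$. It will remain to prove the lower bound, which is equivalent to
\[\lambda_{\max}(M) \leq 1 - F(\gamma) + \frac{\pi^4 \gamma^4}{144 P}.\]

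I would bound $\lambda_{\max}(M) \leq \mathrm{tr}(M) = \sum_{p=1}^P M_{pp}$, a valid inequality because $M$ is PSD. The diagonal entries admit a closed form: by the classical identity $\int_0^R t\, J_1(t)^2\, dt = \frac{R^2}{2}(J_1(R)^2 - J_0(R) J_2(R))$ applied after the substitution $t = \rho_p r$,
\[M_{pp} = 2\pi C_p^2 \cdot \frac{(\rho_p a)^2}{2}\bigl(J_1(\rho_p a)^2 - J_0(\rho_p a) J_2(\rho_p a)\bigr).\]
Applying \autoref{ConjCp} in the form $2\pi C_p^2 \leq \frac{1}{p-1/4}$, combined with $\rho_p \geq \pi(p-1/4)$ from \eqref{EncadrementRhop}, gives $2\pi C_p^2 \leq \pi/\rho_p$ and hence
\[M_{pp} \leq \pi a\, f(\rho_p a), \qquad f(t) \isdef \frac{t}{2}\bigl(J_1(t)^2 - J_0(t) J_2(t)\bigr).\]

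From the definition of $F$, one sees $1 - F(\gamma) = \int_0^{\pi\gamma} f(t)\, dt$. Since the abscissae $\rho_p a$ are separated by $a(\rho_{p+1} - \rho_p) = \pi a + O(a/p^2)$, the quantity $\pi a \sum_p f(\rho_p a)$ will be recognised as a Riemann-sum approximation to this integral. It thus remains to control the error
\[E \isdef \pi a \sum_{p=1}^P f(\rho_p a) - \int_0^{\pi\gamma} f(t)\, dt,\]
and to show $E \leq \frac{\pi^4 \gamma^4}{144 P}$.

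This last step will be the main obstacle. Since $f$ is smooth with $f(t) = t^3/16 + O(t^5)$ near $0$ and the nodes $\rho_p a$ lie in $[\pi(p-1/4)a,\, \pi(p-1/8)a]$—offset from both the midpoints and endpoints of a uniform grid of step $\pi a$—I would perform a careful Taylor expansion of $f$ about the midpoints $\pi(p-1/2)a$, combined with the refined asymptotic $\rho_p = \pi(p-1/4) + O(1/p)$, to track the dominant $O(\gamma^4/P)$ contribution to $E$. The appearance of $1/144 = (1/12)^2$ is reminiscent of the Bernoulli number $B_2/2! = 1/12$ from Euler–Maclaurin, suggesting a second-order cancellation between the first- and second-derivative corrections; making this cancellation explicit and sharp is where the delicate analysis lies.
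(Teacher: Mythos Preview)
Your overall strategy matches the paper exactly: write $A = I - M$ with $M_{kl} = \int_{B(0,a)} \nabla e_k \cdot \nabla e_l$, observe that $M$ is positive semidefinite (giving the upper bound $1$), and then bound $\lambda_{\max}(M) \leq \mathrm{tr}(M)$. Your reduction $M_{pp} \leq \pi a\, f(\rho_p a)$ via $2\pi C_p^2 \leq \pi/\rho_p$ is also correct.

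The divergence is in the final step, and here there is a genuine gap. The paper does \emph{not} view $\pi a \sum_p f(\rho_p a)$ as a Riemann sum and then chase the quadrature error. Instead it uses the slightly weaker split
\[
2\pi C_p^2 \;\leq\; \frac{1}{p-1/4} \;\leq\; \frac{1}{p} + \frac{1}{3p^2},
\]
and treats the two pieces separately. The $\tfrac{1}{p}$-part, combined with $\rho_p \leq \pi p$ and the identity $\int_0^t u J_1(u)^2\,du = \tfrac{t^2}{2}\bigl(J_1(t)^2 - J_0(t)J_2(t)\bigr)$, is bounded by the integral $\int_0^P \tfrac{dt}{t}\int_0^{\pi t a} u J_1(u)^2\,du$, which after the change of variable $s = \pi t a$ is exactly $1 - F(\gamma)$. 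The $\tfrac{1}{3p^2}$-part is handled by the crude inequality $J_1(u) \leq u/2$, giving $\int_0^{\rho_p a} u J_1(u)^2\,du \leq (\rho_p a)^4/16$; then $\rho_p \leq \pi p$ and $\sum_{p=1}^P p^2 \leq P^3/3$ produce precisely $\pi^4 \gamma^4/(144P)$.

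So the constant $\tfrac{1}{144} = \tfrac{1}{3}\cdot\tfrac{1}{16}\cdot\tfrac{1}{3}$ arises from three independent elementary bounds and has nothing to do with Bernoulli numbers or Euler--Maclaurin. Your Riemann-sum route could plausibly yield \emph{some} bound of order $\gamma^4/P$ (since $f'(t) = J_1(t)^2 - f(t)/t = O(t^2)$ on the relevant range), but the nodes $\rho_p a$ are neither equispaced nor aligned with any standard quadrature rule, and there is no mechanism by which the specific constant $\pi^4/144$ would fall out of a Taylor or Euler--Maclaurin analysis. The paper's split sidesteps the quadrature-error question entirely, at the price of a marginally looser starting inequality on $C_p$.
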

This estimate is only useful when ${F(\Pa) > 0}$, that is ${\Pa < \Pastar}$ where $\Pastar$ is the first positive root of $F(z)$. One has 
\[\Pastar \approx 1.471.\]	
In particular, for $\gamma = 1$, The matrix $A$ is well conditioned, the ratio of its largest to its smallest eigenvalues being of the order $F(1)^{-1} < 2$.
A plot of $F$ is provided below, \autoref{figure:Fconditionnement}, and some numerical approximations of the minimal eigenvalue of $A$ are shown in function of $\Pa$ for several values of $P$. 		
\begin{figure}[ht]
	\centering			
	\includegraphics[scale = 1]{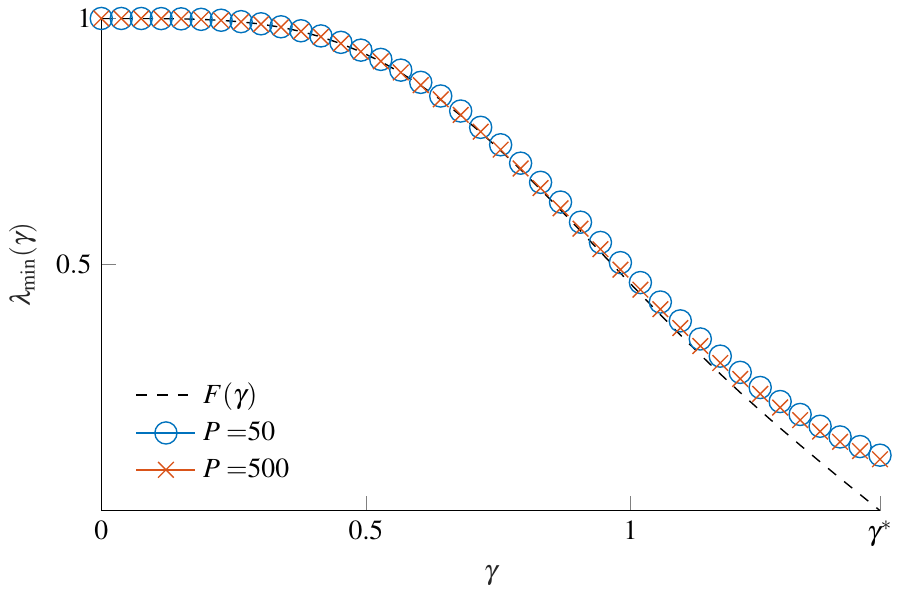}
	\caption{Graph of $F$ and numerical values of the minimal eigenvalue of $A$, $\lambda_{\min}(A)$ in function of $\Pa$, in the case $b=1$, for $P=50$ (blue circles) and $P=500$ (red crosses)}
	\label{figure:Fconditionnement}
\end{figure}
\begin{proof}
	Let $u \in \text{span}\{e_1,e_2,\cdots,e_P\}$ and let $\alpha$ its coordinates on this basis. Then 
	\[\alpha^T A \alpha = \int_{\mathcal{A}(a)} \!\!\!\!\! \nabla{u}^2 < \int_{B} \nabla{u}^2 = \norm{\alpha}_2^2,\]
	showing that the eigenvalues of $A$ are bounded by $1$. Thus $I-A$ is a positive symmetric matrix and the smallest eigenvalue of $A$ is bounded by
	\[\lambda_{\min}(A) \geq 1 - \text{tr}(I - A),\]
	which yields:
	\begin{eqnarray*}
		\lambda_{\min}(A) &\geq& 1 - 2\pi \sum_{p=1}^{P}C_p^2\int_{0}^{\rho_p a} u J_1(u)^2 du.
	\end{eqnarray*}
	We now use  \autoref{ConjCp}, which implies
	\[2\pi C_p^2 \leq \dfrac{1}{p} + \dfrac{1}{3p^2},\]
	combined with \eqref{EncadrementRhop}, to get
	\[\lambda_{\min}(A) \geq 1 - \int_{0}^{P} \frac{dt}{t} \int_{0}^{\pi t a} u J_1(u)^2du \hspace{1pt} - \frac{1}{3}\sum_{p=1}^P \frac{1}{p^2} \int_{0}^{\rho_p a} u J_1(u)^2du.\]
	For the first term, we write 
	\[\int_0^t u J_1(u)^2 = \frac{t^2}{2} \left(J_1^2(t) - J_0(t)J_2(t)\right),\] 
	while for the second, we use the classical inequality $J_1(u) \leq \frac{u}{2}$ to deduce
	\[\lambda_{\min}(A) \geq 1 - \int_{0}^{\pi\gamma} \dfrac{t}{2} \left(J_1(t)^2 - J_0(t)J_2(t)\right)dt - \frac{1}{48} \sum_{p=1}^P \frac{(\rho_p a)^4}{p^2}.\]
	We use again \eqref{EncadrementRhop} to obtain:
	\[\lambda_{\min}(A) \geq 1 - F(\gamma) - \gamma^4\frac{\pi^4}{48} \frac{1}{P^4} \sum_{p=1}^P p^2,\]
	which obviously implies the claimed result.
\end{proof}
\subsubsection*{Conditioning of $A$ for large $\Pa$}																
The behavior of $\lambda_{\min}(A)$ is more difficult to study for large $\Pa$. Nevertheless, we observed the following exponential decay: for any $P \geq 10$, and $\gamma \geq 1.4$, the minimal eigenvalue of $A$ is bounded below by
\begin{equation}
	\label{conjLambdaMin}
	\lambda_{\min}(A) \geq 180 \exp(-5.8\gamma).
\end{equation}
\begin{figure}[H]
	\centering
	\includegraphics[scale = 1]{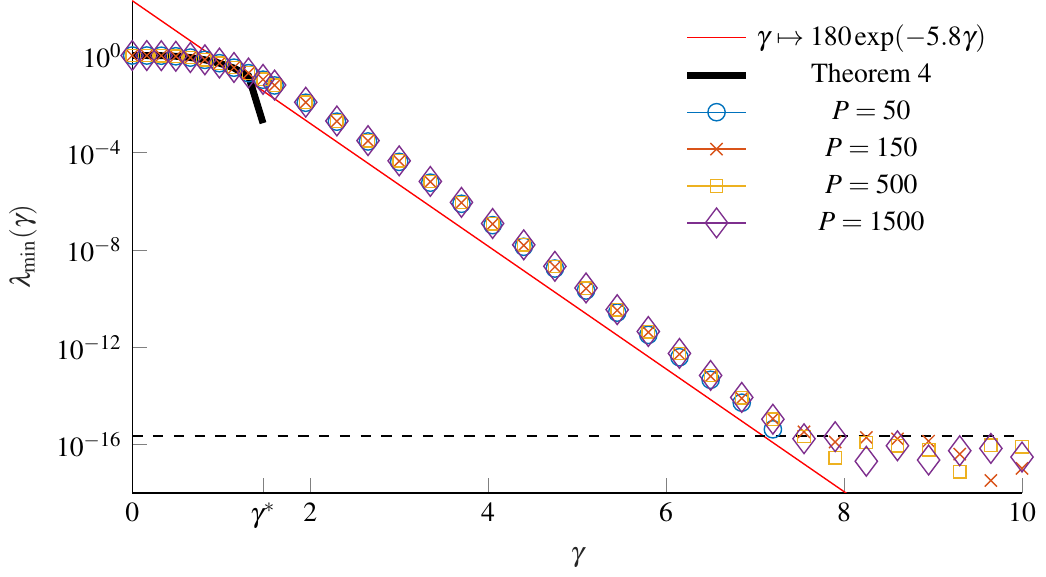}
	\caption{Lower bounds from \autoref{The:lowBoundCon} and \autoref{conjLambdaMin}, and estimated minimal eigenvalue of $A$ in function of $\gamma$ for $P=50$ (blue circles), $P=150$ (red crosses), $P=500$ (yellow squares) and $P=1500$ (purple diamonds). For $P=50$, the computed minimal eigenvalue is negative due to numerical errors from $\gamma \approx 7.5$ so corresponding data cannot be displayed. The horizontal dashed line shows machine precision}
\end{figure}

\section{Application to Laplace and Helmholtz kernels}
\label{sec:ApplicationLaplaceHelmholtz}
\subsection{Laplace kernel}
When solving PDE's involving the Laplace operator (for example in heat conduction or electrostatic problems), one is led to \eqref{discreteConv} with the Laplace kernel $G(r) = \log(r)$ (we have dropped the $-\frac{1}{2\pi}$ constant for simplicity). Here we show that its SBD converges exponentially fast:
\begin{Theorem} 
	\label{theRadialQuadLaplaceErreur}
	There exist two positive constants $L_1$ and $l_2$ such that
	\[ \forall a \in (0,1), \forall P \in \N^*, \forall r \in (a,1), \quad \abs{G(r) - \sum_{p=1}^P \alpha_p e_p(r)} \leq L_1 e^{-l_2 a P} \]
	where $\alpha_1,\cdots,\alpha_P$ are the SBD coefficients of $G$ of order $P$.  
\end{Theorem}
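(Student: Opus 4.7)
The plan is to exploit the variational characterization of the SBD coefficients: since $\alpha_1,\ldots,\alpha_P$ minimize $Q^P$, one has $\|G - \sum_p \alpha_p e_p\|_{H^1_0(\mathcal{A}(a))} \leq \|R_P(\tilde G)\|_{H^1_0(B)}$ for any radial $\tilde G \in H^1_0(B)$ that coincides with $G$ on $\mathcal{A}(a)$. I will apply \autoref{EstimationRest} to a carefully chosen $\tilde G$ and then convert the resulting $H^1_0$ bound to the claimed pointwise bound via the preceding Lemma.

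The key algebraic feature is that $\log r$ is harmonic on $\R^2 \setminus \{0\}$, which I will exploit by constructing $\tilde G$ as the solution of an elliptic problem whose source is supported strictly away from $\mathcal{A}(a)$. Fix a radial cutoff $F \in C^\infty_c((0,1/2))$ (allowed to depend on $n$, see below) with $\int_0^{1/2} s\,F(s)\,ds = 1$, and set $f(r) \isdef F(r/a)/a^2$, which is supported in $[0,a/2]$ with $\int_0^{a/2} s\,f(s)\,ds = 1$. Let $\tilde G$ be the unique radial solution of $-\Delta \tilde G = f$ on $B$ with $\tilde G(1) = 0$. A direct radial integration yields $\tilde G(r) = \log r$ for all $r \geq a/2$, so in particular $\tilde G = G$ on $\mathcal{A}(a)$. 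Since $f$ is supported strictly inside $B$, $(-\Delta)^s \tilde G = (-\Delta)^{s-1} f$ vanishes in a neighborhood of $\partial B$ for every $s \geq 1$, and $\tilde G(1) = 0$; hence $\tilde G$ satisfies the multi-Dirichlet condition at every order $n$, and \autoref{EstimationRest} applies.

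A scaling argument yields
\[
    \|(-\Delta)^n \tilde G\|_{L^2(B)} \;=\; \|(-\Delta)^{n-1} f\|_{L^2(B)} \;=\; \frac{\|(-\Delta)^{n-1} F\|_{L^2(\R^2)}}{a^{2n-1}}.
\]
Taking $F = F_n$ to be an optimal $C^{2n-2}$ cutoff whose derivatives satisfy the Bernstein-type bound $\|F_n^{(k)}\|_{L^\infty} \lesssim (C_0 k)^k$ for $k \leq 2n-2$, one arranges $\|(-\Delta)^{n-1} F_n\|_{L^2(\R^2)} \leq (C_1 n)^{2n}$ for constants $C_0, C_1$ independent of $n$ and $a$. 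Plugging into \autoref{EstimationRest} gives
\[
    \|R_P(\tilde G)\|_{H^1_0(B)} \;\leq\; C_2\, a\, \sqrt{\tfrac{P^3}{n}}\, \left(\tfrac{C_1\, n}{\pi\, a\, P}\right)^{\!2n}\!.
\]
Choosing $n = \lfloor \pi a P /(e C_1)\rfloor$ forces the parenthesized factor below $e^{-1}$, whence $\|R_P(\tilde G)\|_{H^1_0(B)} \lesssim aP \cdot e^{-c\,aP}$; absorbing the polynomial prefactor into a slightly reduced exponent gives $\|R_P(\tilde G)\|_{H^1_0(B)} \leq L_1' e^{-l_2\, aP}$. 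Applying the variational inequality and then the preceding pointwise Lemma to $e_P \isdef G - \sum_p \alpha_p e_p$ (which vanishes at $r=1$ since $G(1) = 0 = e_p(1)$) delivers
\[
    |e_P(r)| \;\leq\; \sqrt{\tfrac{|\log r|}{2\pi}}\, \|e_P\|_{H^1_0(\mathcal{A}(a))} \;\leq\; \sqrt{\tfrac{|\log a|}{2\pi}}\, L_1' e^{-l_2\, aP} \;\leq\; L_1 e^{-l_2\, aP},
\]
for $r \in (a,1)$, the $\sqrt{|\log a|}$ factor being absorbed into the constants (using that $a\,(|\log a| + 1)^{3/2}$ is bounded on $(0,1]$, and shrinking $l_2$ slightly if necessary).

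The main technical obstacle is the construction of the family of cutoffs $F_n$ realizing the Bernstein-type bound $\|F_n^{(k)}\|_{L^\infty} \lesssim (C_0 k)^k$ simultaneously for all $k \leq 2n-2$, together with the uniform normalization $\int s\, F_n(s)\,ds = 1$: this is precisely what allows one to replace the unreachable "analytic cutoff" (which would give the sharpest rate) by an optimally chosen $C^{2n-2}$ one. Tracking the combinatorial constants produced by iterating the 2D radial Laplacian $\Delta = \partial_r^2 + r^{-1}\partial_r$ through Leibniz (they are of Stirling type, with total weight $((2n-1)!!)^2$ after the change of variable $t = \log r$) and combining them with the Bernstein bound on $F_n$ is what determines whether the achieved decay rate is truly exponential in $aP$ rather than merely stretched-exponential; the subsequent optimization $n \asymp aP$ is then routine.
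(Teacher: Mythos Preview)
Your argument is correct and follows a route genuinely different from the paper's. The paper constructs the extension $\tilde G_n$ explicitly as $r^{2n}$ times the degree-$2n$ Taylor polynomial of $r^{-2n}\log r$ at $r=a$: this forces $C^{2n}$ matching across $r=a$, while the prefactor $r^{2n}$ keeps $(-\Delta)^n\tilde G_n$ bounded at the origin; bounding $\|(-\Delta)^n\tilde G_n\|_{L^2}$ then reduces to explicit (and somewhat laborious) Leibniz/Stirling estimates on the Taylor coefficients of $r^{-2n}\log r$ (their Lemma~\ref{LemmeDegueu}). Your approach instead exploits directly that $\log$ is the 2D fundamental solution: writing $\tilde G$ as the Dirichlet solution of $-\Delta\tilde G=f$ with $f$ a rescaled bump supported in $\{r<a/2\}$ gives $\tilde G\equiv\log$ on $\mathcal A(a)$ automatically and transfers all the work to $(-\Delta)^{n-1}F$, while the scaling identity $\|(-\Delta)^{n-1}f\|_{L^2}=a^{-(2n-1)}\|(-\Delta)^{n-1}F\|_{L^2}$ isolates the $a$-dependence for free. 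What the paper's construction buys is complete explicitness---no appeal to the existence of finite-order Gevrey cutoffs---and a concrete exponent $l_2=\pi/32$; what yours buys is conceptual cleanliness and a template that transposes verbatim to any radial fundamental solution, at the price of the cutoff construction you correctly flag as the technical crux (the substitution $t=\log r$, turning $\Delta^{m}$ into $r^{-2m}\prod_{j=0}^{m-1}(\partial_t-2j)^2$, is indeed the efficient way to track those constants). Both proofs close identically---variational inequality, then the pointwise Lemma, then absorption of the $\sqrt{P\,|\log a|}$ prefactor into the exponential---and both tacitly use that $aP$ is bounded below for that last absorption, which is precisely the regime where the estimate is nontrivial (the paper makes the same ``without loss of generality $aP\pi/64>1$'' reduction).
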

																		
We show this by exhibiting an extension $\tilde{G}$ for which we are able to estimate the remainder of the Fourier-Bessel series.
For any $n \in \N^{*}$, let us define extensions $\tilde{G}_n$ of $G$ as
\begin{equation}
	\tilde{G}_n = \begin{cases}
	r^{2n}\sum_{k=0}^{2n} \dfrac{a_{k,n}}{k!}(r-a)^k &\text{ if }r \leq a, \\
	G(r) &\text{ otherwise.}
	\end{cases}
\end{equation}
where the coefficients $a_{k,n}$ are chosen so that $\tilde{G}_n$ has continuous derivatives up to the order $2n$:
\[a_{k,n} = {\dfrac{d^k}{dr^k}\left(\dfrac{\log(r)}{r^{2n}}\right)\bigg|}_{r=a}.\]
Notice that the $r^{2n}$ term ensures the boundedness of $(-\Delta)^n \tilde{G}_n$ near the origin. Also observe that for all $s \in \N$:
\[(-\Delta)^s \tilde{G}_n \text{ vanishes on } \mathcal{C},\]
since $\tilde{G}_n \equiv G$ in a vicinity of $\mathcal{C}$. 
We now go into some rather tedious computations to provide a crude bound for $\norm{(-\Delta)^n \tilde{G}_n}_{L^2(B)}$ in terms of the coefficients $a_{k,n}$.
																		
\begin{lemma} 
	\label{LemmeDegueu}
	There exists a constant $C$ independent of $n$ and $a$ such that for $r<a$
	\begin{equation}
		\left|\Delta^n \tilde{G}_n(r)\right| \leq  C \left( \frac{16n}{e}\right)^{2n}\!\!\!\!\!\max_{k\in \left\{1,\cdots,2n\right\}}\left(\dfrac{|a_{k,n}|}{k!}a^k\right).
		\label{bigBadEq1Reduced}
	\end{equation}
	\label{LemAkDeltanf}
\end{lemma}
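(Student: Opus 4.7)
The strategy is to expand the polynomial defining $\tilde{G}_n$ monomial by monomial, apply the explicit action of $\Delta^n$ on each power of $r$, and then bound the resulting sum via Stirling's formula. By the binomial theorem, $r^{2n}(r-a)^k = \sum_{j=0}^k \binom{k}{j}(-a)^{k-j} r^{2n+j}$, and in the two-dimensional radial setting $\Delta r^m = m^2 r^{m-2}$, so by iteration $\Delta^n r^{2n+j} = c_{n,j}\,r^j$ with $c_{n,j} \isdef \prod_{i=0}^{n-1}(2n+j-2i)^2$. Combining:
\[ \Delta^n\!\left[r^{2n}(r-a)^k\right] = \sum_{j=0}^k \binom{k}{j}(-a)^{k-j} c_{n,j}\, r^j. \]

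Since $c_{n,j}$ is non-decreasing in $j$, for $r \in [0,a]$ the absolute value of this sum is at most $c_{n,k}(a+r)^k \le c_{n,k}\,(2a)^k$. Using the Gamma-function representation $\prod_{i=0}^{n-1}(2n+k-2i) = 2^n\,\Gamma(n+1+k/2)/\Gamma(1+k/2)$ and Stirling's formula produces, uniformly in $k \in \{0,\ldots,2n\}$, a bound $c_{n,k} \le C_0\,(8n/e)^{2n}$. Combined with $2^k \le 4^n$ this gives
\[ \left|\Delta^n\!\left[r^{2n}(r-a)^k\right]\right| \le C_1 \left(\frac{16n}{e}\right)^{2n} a^k. \]
Multiplying by $|a_{k,n}|/k!$ and summing over $k \in \{0,\ldots,2n\}$ yields the desired control on $|\Delta^n \tilde{G}_n(r)|$.

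To conclude, I would replace the sum by the maximum. The naive bound introduces a polynomial factor $(2n+1)$, which is absorbed using the exponential slack in the Stirling estimate: the crude bound is tight only at $k=2n$, while for $k < 2n$ a strictly smaller effective base $cn/e$ with $c<16$ can be extracted, and $(2n+1)(c/16)^{2n}$ is uniformly bounded in $n$. The index $k=0$ can be excluded from the maximum because $|a_{0,n}| = |\log a|/a^{2n}$ is dominated (up to a factor of order $n$, itself absorbable) by $|a_{1,n}|\,a$, which already appears in the maximum over $\{1,\ldots,2n\}$. The main technical obstacle is tracking the constants through the Stirling approximation tightly enough to reach exactly the announced base $16n/e$; the underlying arithmetic is elementary but must be executed without slack.
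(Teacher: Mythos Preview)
Your approach is essentially the paper's: expand $(r-a)^k$ by the binomial theorem, apply $\Delta^n r^{2n+j} = c_{n,j}r^j$, bound $c_{n,j}\le c_{n,2n}$, and invoke Stirling to get $c_{n,2n}\sim 2(8n/e)^{2n}$. The one structural difference is where you place the bound $2^k\le 4^n$, and this is precisely where your argument develops a gap.

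The paper, after bounding $c_{n,l}\le c_{n,2n}$ and extracting $\max_k\bigl(\tfrac{|a_{k,n}|}{k!}a^k\bigr)$, is left with the double sum
\[
\sum_{k=0}^{2n}\sum_{l=0}^{k}\binom{k}{l}\Bigl(\tfrac{r}{a}\Bigr)^{l}=\sum_{k=0}^{2n}\Bigl(1+\tfrac{r}{a}\Bigr)^{k}<2^{2n+1},
\]
so the factor $2^{2n+1}$ combines with $(8n/e)^{2n}$ to give $(16n/e)^{2n}$ directly, with no extra polynomial factor.

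You instead bound $c_{n,k}\le C_0(8n/e)^{2n}$ and $2^k\le 4^n$ \emph{separately for each $k$}, obtaining $C_1(16n/e)^{2n}a^k$ per term, and are then forced to absorb a factor $(2n+1)$ when passing from sum to max. Your absorption argument --- ``for $k<2n$ a strictly smaller effective base $cn/e$ with $c<16$ can be extracted, and $(2n+1)(c/16)^{2n}$ is uniformly bounded'' --- does not work as stated. No fixed $c<16$ serves for all $k<2n$: for $k=2n-1$ one computes $c_{n,2n-1}2^{2n-1}/(c_{n,2n}4^n)\to \tfrac14$ as $n\to\infty$, not $(c/16)^{2n}$ for any fixed $c<16$. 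The terms with $k$ close to $2n$ each contribute a nonvanishing fraction of the top term, so the crude ``one tight term plus $2n$ exponentially small ones'' picture is false.

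The repair is immediate and is exactly what the paper does: do not bound $2^k$ termwise. After $c_{n,k}\le c_{n,2n}$, pull out the max and keep $\sum_{k=0}^{2n}2^k<2^{2n+1}$ as a geometric sum. Your handling of the $k=0$ index is fine (indeed $|a_{0,n}|\le \tfrac{1}{2n}|a_{1,n}|\,a$); the paper itself is slightly careless here, taking the max over $\{0,\dots,2n\}$ in its proof despite stating $\{1,\dots,2n\}$ in the lemma.
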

																		
\begin{proof} For $r \leq a$, we have
	\[\Delta^n \tilde{G}_n(r) = \sum_{k=0}^{2n}\sum_{l=0}^k \dbinom{k}{l}\dfrac{a_{k,n}}{k!}(-a)^{k-l}(2n+l)^2 (2(n-1)+l)^2\times ... \times (2+l)^2 r^{l}.\]
	This result is obtained by expanding the sum in the definition of $\tilde{G}_n$ and using the fact that $\Delta r^k = k^2r^{k-2}$. Hence, using triangular inequality
	\[|(-\Delta)^n \tilde{G}_n(r)| \leq \sum_{k=0}^{2n}\sum_{l=0}^k \dbinom{k}{l}\dfrac{|a_{k,n}|}{k!}a^{k-l}(2n+l)^2(2(n-1)+l)^2\times ... \times (2+l)^2r^{l}.\]	
	For $l\in \{1,\cdots,2n\}$, we apply the following (crude) inequality:
	\begin{equation}
		(2n+l)^2(2(n-1)+l)^2\times ... \times (2+l)^2 \leq (4n)^2(4n-2)^2 \times ... \times (2n+2)^2
		\label{estimationTresGrossiere}
	\end{equation}
	to obtain: 
	\begin{eqnarray*}		
		|(-\Delta)^n \tilde{G}_n(r)| &\leq& (4n)^2(4n-2)^2 \times ... \times (2n+2)^2\times\\
		&&\times\max_{k\in\llbracket 0,2n\rrbracket}\left(\dfrac{|a_{k,n}|}{k!}a^k\right)\sum_{k=0}^{2n}\sum_{l=0}^k \dbinom{k}{l}a^{-l}r^l	
	\end{eqnarray*}
	Since $r<a$, the last sum is bounded by $\displaystyle\sum_{k=0}^{2n}2^k = 2^{2n+1}-1 < 2^{2n+1}$,
	while 
	\[(4n)^2(4n-2)^2\times...\times (2n+2)^2 \sim 2\left(\dfrac{8n}{e}\right)^{2n}\]
	follows from Stirling formula. 
	
\end{proof}
We are now able to prove the following, which implies \autoref{theRadialQuadLaplaceErreur}.
\begin{Theorem}
	\label{The:DecroissanceErreurProlongementPoly}
	There exists a constant $C$ such that, for any $P \in \N^*$ and $a \in (0,1)$, there exists a radial function $\tilde{G}$ which coincides with $G$ on $\mathcal{A}(a)$ satisfying:
	\[\norm{\tilde{G} - \sum_{p=1}^{P}c_p(\tilde{G})e_p}_{\Hzrad} \hspace{-0.7cm}\leq C \sqrt{P} \exp\left(-\frac{aP\pi}{32}\right).\]
							
\end{Theorem}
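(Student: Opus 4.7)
The plan is to apply the construction with $\tilde G \isdef \tilde G_n$ (for an integer $n$ to be chosen proportional to $aP$), use \autoref{EstimationRest}, bound $\norm{(-\Delta)^n\tilde G_n}_{\Lrad}$ via \autoref{LemAkDeltanf}, and then optimize over $n$.

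I first check the hypotheses. The function $\tilde G_n$ lies in $H^{2n}(B)$ by construction: on $[0,a]$ it is a polynomial (smooth up to the origin thanks to the prefactor $r^{2n}$), on $[a,1]$ it equals $\log r$, and the coefficients $a_{k,n}$ are chosen so that the two pieces agree $C^{2n}$ at $r=a$. The multi-Dirichlet conditions of order $n$ are satisfied for free: $\tilde G_n(1)=\log 1=0$ gives the $s=0$ case, and for $s\geq 1$ the function $(-\Delta)^s \tilde G_n$ vanishes identically in a neighborhood of $\mathcal{C}$ since $\log r$ is harmonic on $\R^2\setminus\{0\}$. The same harmonicity ensures $(-\Delta)^n \tilde G_n\equiv 0$ on the whole annulus $\mathcal{A}(a)$, so its $L^2$ norm only sees the contribution from the disk $\{\abs{\bs{x}}\leq a\}$.

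The next step is to bound $\max_{0\leq k\leq 2n}\abs{a_{k,n}}a^k/k!$. Since $a_{k,n}$ is the $k$-th derivative at $r=a$ of the complex-analytic function $z\mapsto \log(z)/z^{2n}$, I apply Cauchy's integral formula on the circle $\{\abs{z-a}=a/2\}$, where $\abs{\log z}\leq C\abs{\log a}$ and $\abs{z}^{-2n}\leq (2/a)^{2n}$. This yields $\abs{a_{k,n}}a^k/k!\leq C\abs{\log a}\cdot 16^n/a^{2n}$. Combining with the $(16n/e)^{2n}$ factor of \autoref{LemAkDeltanf} and integrating against $r\,dr$ on $[0,a]$, I obtain $\norm{(-\Delta)^n \tilde G_n}_{\Lrad}\leq C\abs{\log a}\,a\,\bigl(64n/(ea)\bigr)^{2n}$.

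Feeding this into \autoref{EstimationRest} produces $\norm{R_P(\tilde G_n)}_{\Hzrad}\leq C\abs{\log a}\,a\sqrt{P^3/n}\cdot \bigl(64n/(\pi e a P)\bigr)^{2n}$. The exponential factor is minimized when its base equals $1/e$, i.e.\ for $n=\pi aP/64$, producing $\exp(-2n)=\exp(-\pi aP/32)$. Taking $n=\lfloor \pi aP/64\rfloor$ (and absorbing the regime $aP=O(1)$ into the constant $C$), the remaining algebraic prefactor is $O\bigl(\sqrt{a}\abs{\log a}\sqrt{P}\bigr)$, which is bounded uniformly in $a\in(0,1)$ by $C\sqrt{P}$, yielding the claimed bound. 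The main delicate point is the estimate of $a_{k,n}$: the specific value $16^n$ (which comes from using a Cauchy circle of radius $a/2$), paired with the constant $16$ inside \autoref{LemAkDeltanf}, is what produces the coefficient $64$ in the minimized exponential and ultimately the exponent $\pi/32$; any slack there propagates directly into a worse decay rate.
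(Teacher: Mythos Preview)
Your overall strategy is the same as the paper's: use the extension $\tilde G_n$, invoke \autoref{LemAkDeltanf} and \autoref{EstimationRest}, and choose $n\approx \pi aP/64$. The use of Cauchy's integral formula on $\{|z-a|=a/2\}$ to bound $a_{k,n}$ is a neat shortcut compared to the paper's explicit Leibniz computation.

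However, there is a genuine gap in the last step. Your prefactor is not $O(\sqrt a\,|\log a|\,\sqrt P)$ but $O(\sqrt a\,|\log a|\,P)$. Indeed, with $n\sim \pi aP/64$ one has
\[
\sqrt{\frac{P^3}{n}}\;\sim\; \frac{P}{\sqrt a},
\]
so that $a\cdot\sqrt{P^3/n}\sim \sqrt a\,P$, not $\sqrt a\,\sqrt P$. Since $\sqrt a\,|\log a|$ is bounded on $(0,1)$, your argument actually yields
\[
\norm{R_P(\tilde G)}_{\Hzrad}\leq C\,P\,e^{-\pi aP/32},
\]
which misses the stated bound by a factor $\sqrt P$.

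The reason is precisely the ``slack'' you flagged at the end: your Cauchy estimate with fixed radius $a/2$ gives
\[
\max_{0\leq k\leq 2n}\frac{|a_{k,n}|}{k!}a^k \leq C\,|\log a|\Bigl(\frac{4}{a}\Bigr)^{2n},
\]
whereas the paper's Leibniz computation (culminating in the central binomial estimate $\binom{4n-1}{2n}\leq 4^{2n}/(2\sqrt{2\pi n})$) produces an additional factor $1/\sqrt n$:
\[
\max_{0\leq k\leq 2n}\frac{|a_{k,n}|}{k!}a^k \leq \frac{C}{\sqrt n}\,\log\!\Bigl(\frac{e}{a}\Bigr)\Bigl(\frac{4}{a}\Bigr)^{2n}.
\]
That extra $1/\sqrt n$ is exactly what, after substituting $n\sim aP$, converts the $P$ in the prefactor into $\sqrt P$. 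To match the theorem as stated you therefore need either the paper's sharper Leibniz-based bound, or a Cauchy estimate with a $k$-dependent radius (so as to recover the binomial coefficient asymptotics); the fixed radius $a/2$ is too crude by $\sqrt n$.
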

\begin{proof}
	Let $n \in \N^*$, by Leibniz formula, 
	\begin{eqnarray*}						
		\dfrac{d^k }{dr^k}\left(r^{-2n}\log(r)\right) & = & \dfrac{(-1)^k k!}{r^{2n+k}}  \left(-\displaystyle\sum_{j=0}^{k-1}\dfrac{\dbinom{2n+j-1}{j}}{k-j}+\dbinom{2n+k-1}{k}\log(r)\right). \\
	\end{eqnarray*}
	This leads to \[\dfrac{|a_{k,n}|}{k!}a^k \leq a^{-2n} \dbinom{2n + k -1}{k}\left(\frac{k}{2n}-\log(a)\right),\]
	where we have used the identity
	\begin{equation*}
		\sum_{j=0}^{k-1}\dbinom{j+2n-1}{j} = \dfrac{k}{2n}\dbinom{k+2n-1}{k}.
	\end{equation*}
	Observe that
	\begin{equation*}
		\dbinom{2n+k-1}{k}\leq \dbinom{4n-1}{2n} = \frac{1}{2}\dbinom{4n}{2n} \leq \dfrac{4^{2n}}{2\sqrt{2\pi n}} \quad k \in \{1,\cdots,2n\},
	\end{equation*}
	and thus,
	\begin{equation}
		\max_{0\leq k \leq 2n}\left(\dfrac{|a_{k,n}|}{k!}a^k\right) \leq \left(\frac{4}{a}\right)^{2n}\dfrac{1}{2\sqrt{2\pi n}}\left(\log\left(\frac{e}{a}\right)\right).
		\label{majorAkLog} 
	\end{equation}							
	Combining (\ref{majorAkLog}) with estimation (\ref{bigBadEq1Reduced}), we find that there exists a constant $C$ such that, for $r<a$
	\[|(-\Delta)^n \tilde{G}_n (r)|\leq \dfrac{C}{\sqrt{n}}\left( \frac{16n}{e}\right)^{2n}\left(\frac{4}{a}\right)^{2n}\log\left(\dfrac{e}{a}\right).\]
	Therefore, integrating on $B(0,a)$, we get
	\[ \norm{(-\Delta)^n \tilde{G}_n}_{L^2(B(0,a))} \leq \dfrac{C a^2}{\sqrt{n}}\log\left(\frac{e}{a}\right)\left( \frac{64n}{ae}\right)^{2n}\!\!\!\!\!\!,\]
	and since 
	\[(-\Delta)^n \tilde{G}_n(x) = (-\Delta)^n G(x) = 0\]
	for $|x|>a$, the same bound applies to $\norm{(-\Delta)^n \tilde{G}_n(x)}_{L^2(B)}$. 
	We now plug this estimate into the inequality of corollary \ref{EstimationRest}, to get
	\[ \norm{\tilde{G}_n - \sum_{p=1}^{P}c_p(\tilde{G}_n)e_p}_{\Hzrad} \!\!\!\!\!\!\!\!\!\!\leq~~ C \dfrac{P^\frac{3}{2}}{n} a^2 \log\left(\dfrac{e}{a}\right)\left( \frac{64 n}{ae P \pi}\right)^{2n}\!\!\!\!\!\!.\] 
	The previous inequality holds true for any integer $n$ such that $n>1$ and any $P \in \mathbb{N}$. Without loss of generality, one can assume that $\frac{aP\pi}{64} >1$. In this case, let $n_P = \lfloor \frac{aP\pi}{64}\rfloor $, and $\tilde{G} = \tilde{G}_{n_P}$. Using the fact that $x\mapsto x \log\left(\dfrac{e}{x}\right)$ is bounded on $(0,1]$, we get 
	\[ \norm{\tilde{G} - \sum_{p=1}^{P}c_p(\tilde{G})e_p}_{\Hzrad} \leq C \sqrt{P} e^{-\frac{aP\pi}{32}}.\]
	
\end{proof}																												
\begin{Remark}
	The convergence rate is indeed bounded by a function of the parameter $\Pa$. \autoref{ApplicationNumLaplace} shows the decay of the $L^\infty$ error in function of $\Pa$ for different values of $P$. It can be seen that the error consistently decreases at an exponential rate of about $l_2 \approx 3.7$, and stagnates at the minimal error $e_{\min} \approx 10^{-10}$. We believe this stagnation is due to rounding errors related to the increasing condition number of the matrix $A$. However, note that the situation is invariant for constant $\gamma$, thus the stability of the linear system only depends on the target tolerance and not on the size of the problem. For example, to reach the error level $\varepsilon = 10^{-3}$, it is sufficient to take $\gamma \approx 1.8$ and the conditioning of $A$ is about $200$, independent on the specific value of $P$ and $a$.  
\end{Remark}
\begin{figure}[ht]
	\newlength{\plotwidth}
	\setlength{\plotwidth}{0.49\textwidth}
	\centering		
	\begin{subfigure}[b]{\plotwidth}
		\centering
		\includegraphics[scale = 0.85]{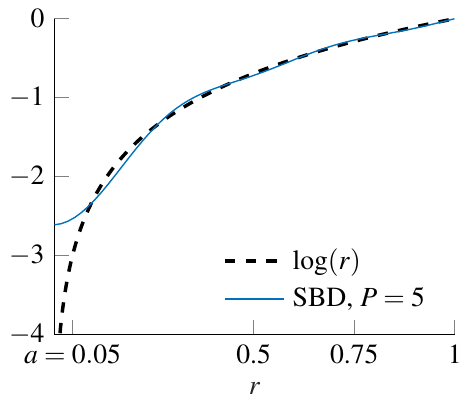}
		\caption{}
	\end{subfigure}%
	\hspace{1.5pt}
	\begin{subfigure}[b]{\plotwidth}
		\centering
		\includegraphics[scale = 0.85]{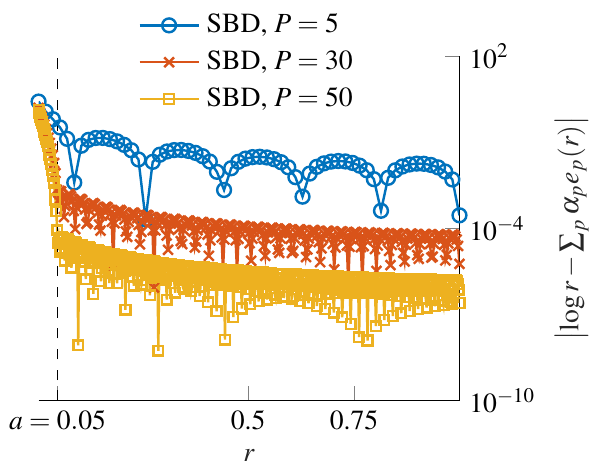}
		\caption{}
	\end{subfigure}%
																																																																								
	\begin{subfigure}[b]{\plotwidth}
		\centering
		\includegraphics[scale = 0.85]{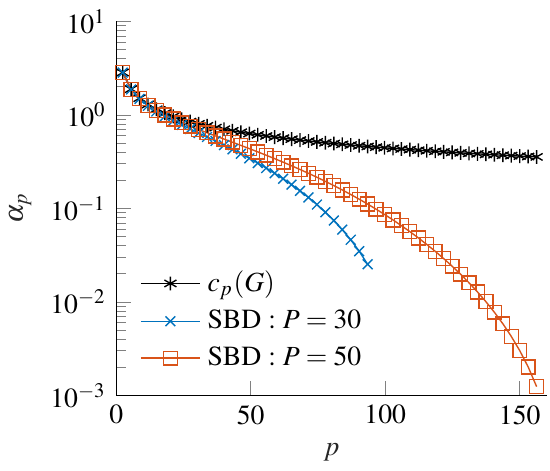}
		\caption{}	
	\end{subfigure}%
	\begin{subfigure}[b]{\plotwidth}
		\centering
		\includegraphics[scale = 0.85]{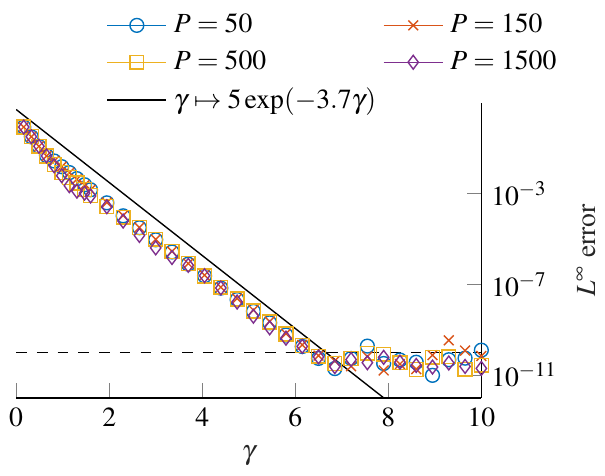}
		\caption{}				
		\label{ApplicationNumLaplace}
	\end{subfigure}%
	\caption{Approximation of $G(r) = \log r$ by SBD. \textbf{(a):}$L^{\infty}$ error between $\log(x)$ (dashed black) and its SBD approximation (solid blue) with $a=0.05$ and $P=5$. \textbf{(b):} Values of $\log r - \sum_{p}\alpha_pe_p(r)$ where $\alpha_p$ are the SBD coefficients for $a=0.05$ of order $P=5$ (Blue circles), $P=30$, (red crosses) and $P=50$ (yellow squares). \textbf{(c):} Numerical values of the SBD coefficients $\alpha_p$ in function of $p$, for $P=5$ (blue circles), $P=30$ (red crosses) and $P=  50$ (yellow squares). The black stars show the values of the exact (slowly decaying) Fourier-Bessel coefficients of $G$, that is $\alpha_p = c_p(G) = \frac{1}{\sqrt{\pi}\rho_p \abs{J_1(\rho_p)}}$. \textbf{(d):} Evolution of the $L^{\infty}$ error over $[a,1]$ associated to the $SBD$ for different values of $P$ in function of $\Pa$. An exponential decay is indeed observed, at the roughly estimated rate of $\propto \exp(-3.7\Pa)$. The stops decreasing at $e_{\min} \approx 10^{-10}$, for a value of $\Pa \approx 6.7$ because of the ill conditioning of the linear system \eqref{LinearSystem}}
	\label{MultiFig}
\end{figure}

\subsection{Helmholtz kernel}
\label{HelmoholtzSubSection}
Let $Y_0$ the classical Bessel function of second kind and of order $0$. For any $k>0$, the Helmholtz kernel, $r \mapsto \frac{-i}{4}H^{(1)}(kr)$, where $H^{(1)}(r) = J_0(r) + i Y_0(r)$, is the fundamental Green's kernel associated to the harmonic wave operator $- \Delta - k^2$, that satisfies a Sommerfeld radiation condition at infinity, (see for example  \cite{wilcox1975scattering}). This kernel arises in various physical problems, such as sound waves scattering. To approximate $H^{(1)}(kr)$ as a sum of dilated $J_0$ functions, it is sufficient to produce a SBD decomposition of $r \mapsto Y_0(kr)$. We can obtain good approximations of $Y_0$ in series of dilated functions in the following way:
\begin{itemize}
	\item[-]\textbf{When $k$ is a root of $Y_0$}: In this case the multi-Dirichlet condition is satisfied at any order. Indeed, for any $n$, 
	\[(-\Delta)^n Y_0(kr)\big|_{r=1} = k^{2n} Y_0(k) = 0.\]
	We thus produce a SBD decomposition of $Y_0$ on an interval $(a,1)$. Just like for the Laplace kernel, it was observed that the approximation error converges exponentially fast to zero, as soon as $P$ is greater than $k$. 
	\item[-]\textbf{When $k$ is close to, or greater than the first root of $Y_0$}: we find a Sparse Bessel Decomposition for $r \mapsto Y_0(k'r)$ on $(a,1)$, where $k'$ is the first root of $Y_0$ larger than $k$. This provides a decomposition for $r \mapsto Y_0(kr)$ valid on $(\frac{k'}{k}a,\frac{k'}{k})$.
	\item[-]\textbf{When $k$ is much smaller than the first root of $Y_0$}: the previous idea might lead to unnecessary efforts. Indeed, to ensure that $\frac{k'}{k}a$ is small enough, one would have to choose a very small value of $a$ leading to a very long Bessel series. Instead, one can use the Bessel-Fourier series associated to the Robin condition (see \autoref{Robin}):
	\[\dfrac{\partial u}{\partial n} + H u = 0,\]
	noticing that $H = -\dfrac{k Y_0'(k)}{Y_0(k)} > 0$ in this region. 
\end{itemize}
\noindent

\begin{figure}[ht]	      	
	\setlength{\plotwidth}{0.7\textwidth}
	\centering	
	\includegraphics[scale = 1]{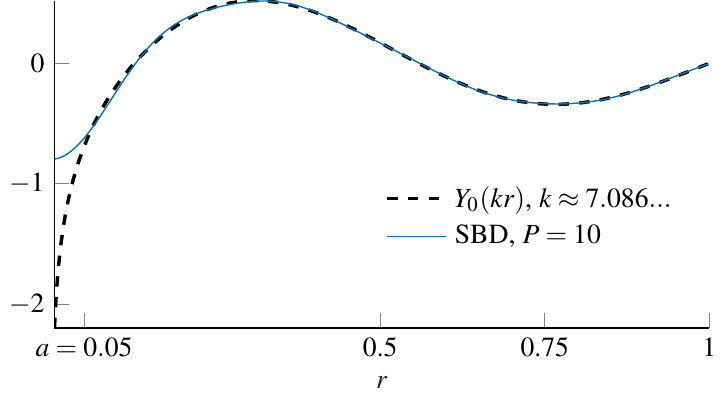}
	\caption{SBD of $Y_0(kr)$ with $k$ a root of $Y_0$ approximately equal to $7.086$, with $P=10$ terms and $a=0.05$ }	      
	\label{Y0Example}
\end{figure}

\subsection{General kernel : enforcing the multi-Dirichlet condition}
\label{begal1}
For general kernels $G$, the multi-Dirichlet conditions may not be fulfilled, even after rescaling. When applying the SBD method without any changes, this leads to the situation observed in the top panel of \autoref{fig:badSit}. In this figure, the SBD is applied to the kernel $G_1(r) = \log(r) + \sin(r)$ (note that $\Delta G_1(1) \neq 0$), and we plot the error in function of $r$ for several values of $\gamma$ and $P = 10$. The error curve stagnates near $r=1$. In the bottom panel, we apply the SBD to $G_2(r) = \log(r) + \sin(r) - \frac{1}{4}\Delta G_1(1)r^2$ where the last term enforces the Dirichlet condition, and the error decay is improved. 

\begin{figure}[ht]	
	\centering
	\setlength{\plotwidth}{\textwidth}
	\begin{subfigure}[b]{\plotwidth}
		\centering
		\includegraphics[scale = 1]{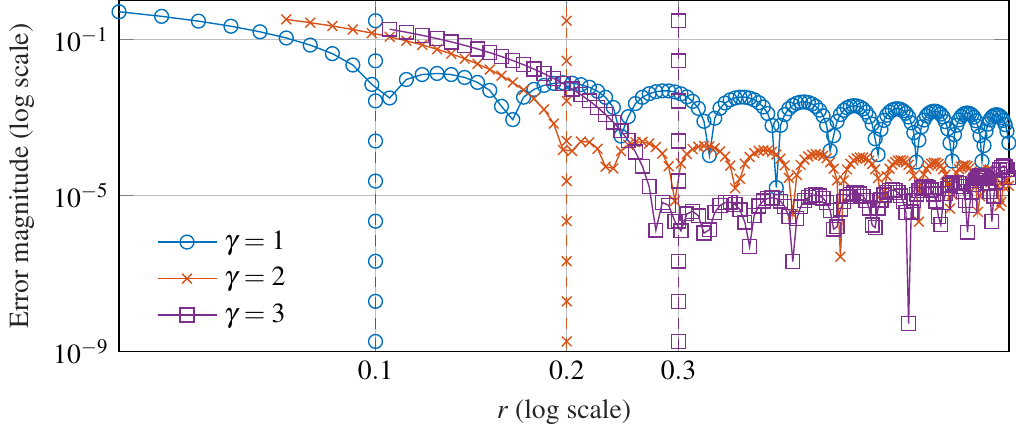}
		\caption{$G_1(r) = \log(r) + \sin(r)$}
	\end{subfigure}
	\vspace{5.0pt}

	\begin{subfigure}[b]{\plotwidth}
		\centering
		\includegraphics[scale = 1]{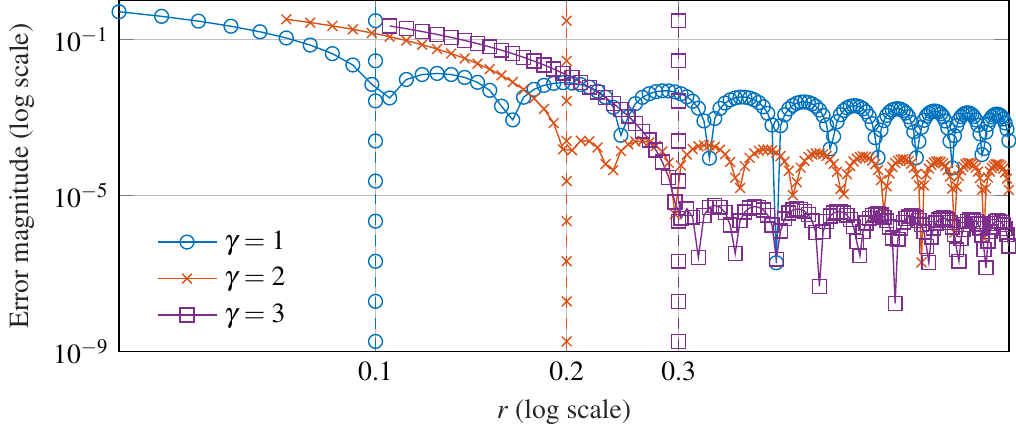}
		\caption{$G_2(r) = \log(r) + \sin(r) - \frac{1}{4}\Delta G_1(1)r^2$}		
	\end{subfigure}
	\caption{SBD of order $10$ applied to the kernels $G_1(r) = \log(r) + \sin(r)$ (top panel) and $G_2(r) = \log(r) + \sin(kr) - \frac{1}{4}\Delta G_1(1)r^2$ (bottom panel), for $\gamma = 1$ (blue circles), $\gamma = 2$ (red crosses) and $\gamma = 3$ (purple squares). The vertical dashed lines show the position of $a$ for each $\gamma$.}
	\label{fig:badSit}
\end{figure}

More generally, for any radial $G$, we can apply the SBD to a modified function $H = G - K$ where $K$ is chosen to enforce the multi-Dirichlet condition. Since we wish to obtain for $G$ a decomposition in sum of Bessel functions, we propose to choose $K$ in the form 
\[K(r) = \sum_{t=1}^{n} \mu_t J_0(\omega_t r)\]
for some $\varInRange{\omega}{t}{1}{n}$ that are not roots of $J_0$. It is not the aim of this paragraph to describe a systematic way of choosing $\varInRange{\omega}{t}{1}{n}$, as this work is still in progress. However, when the first few iterates of the Laplace operator on $G$ are known, we suggest the following choice: let $\omega'$ the square root of the average ratio between too successive iterates, choose $\omega_1$ as the root of $J_1$ that is closest from $\omega'$. Then, assign $\omega_2, \cdots, \omega_n$, successively to the closest roots of $J_1$. The coefficients $(\mu_t)_{1 \leq t \leq n}$ are finally found by inverting a small linear system 
\[M\mu = \lambda,\]
where $\lambda$ is the vector given by
\begin{eqnarray*}
	\lambda_t &=& (-\Delta)^t G \big|_{r=1}, \quad t\in \{1,\cdots,n\},
\end{eqnarray*}
and with
\[M=
	\begin{bmatrix}
		-\omega_1^2      & -\omega_2^2             & \cdots & -\omega_n^2  \\
			\omega_1^4   & \omega_2^4 		              & \cdots & \omega_n^4   \\
		\vdots      	& \vdots                              & \cdots & \vdots     \\
		(-1)^n \omega_1^{2n}      & (-1)^n \omega_2^{2n}  & \cdots &(-1)^n \omega_n^{2n} \\ 
	\end{bmatrix}
\]
\vspace{5pt}

In \autoref{figArbitraryKernel}, we show the efficiency of this method by applying the SBD with $100$ terms to some highly oscillating function ($x \mapsto \log(x) + \sin(250x)$), 
for $n \leq 3$, and computing the maximal error of the decomposition in function of $\gamma$. The frequencies $\varInRange{\omega}{t}{1}{n}$ are the roots of $J_1$ that are closest to $250$. 
										
\begin{figure}[H]	
	\setlength{\plotwidth}{0.7\textwidth}
	\centering
	\includegraphics[scale = 0.8]{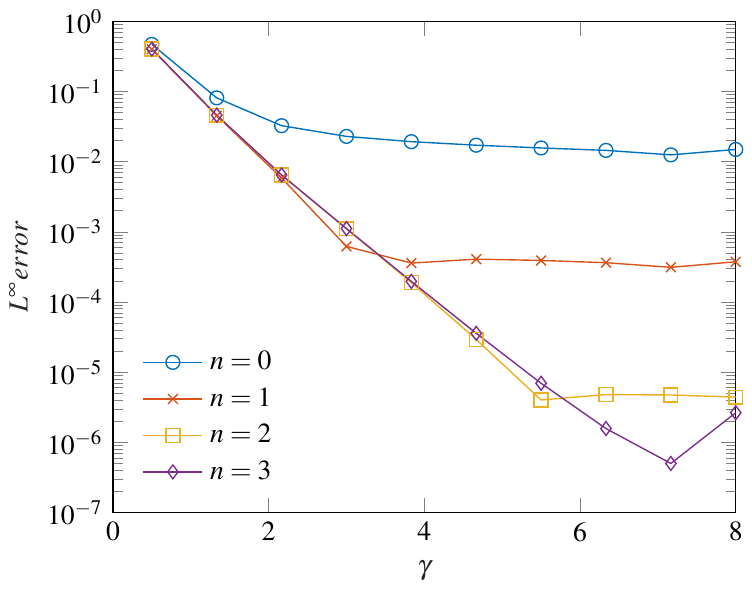}
	\caption{Maximal error betwwen the kernel $G_1(r) =  \log(x) + \sin(250x)$ and its $100$-terms SBD in function of $\gamma$ using the method described in this paragraph for several values of $n$.}
	\label{figArbitraryKernel}
\end{figure}																									
																														
\section{Circular quadrature}
\label{sec:circular}
In this section, we study the approximation of the form
\[ J_0(\rho_p|x|) \approx \dfrac{1}{M_p}\sum_{m=0}^{M_p-1}e^{i \rho_p \bs{\xi}^p_m \cdot x}, \]
for some integer $M_p$ and some quadrature points $(\bs{\xi}_m^p)_{1 \leq m \leq M_p}$. 
																														
\subsection{Theoretical bound}
\begin{Theorem} There exists a constant $K$ such that for any $r>0$, $M\in \N^*$, such that $M \geq \frac{e}{2}r$, and for any $\varphi \in \R$ 
	\[\left|J_0(r) -  \dfrac{1}{M}\sum_{m=0}^{M-1}e^{ir\sin\left(\frac{2m\pi}{M}-\varphi\right)} \right| \leq K \left(\dfrac{er}{2M}\right)^M.\]
	\label{QuadratureCirc}
\end{Theorem}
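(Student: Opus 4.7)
The starting point is the classical integral representation
\[ J_0(r) = \frac{1}{2\pi}\int_0^{2\pi} e^{i r \sin(\theta - \varphi)} d\theta, \]
so the quadrature sum is simply the $M$-point periodic trapezoidal rule applied to the $C^\infty$ (actually entire) $2\pi$-periodic integrand $\theta \mapsto e^{ir\sin(\theta - \varphi)}$. The plan is to analyze the aliasing error of this trapezoidal rule explicitly via the Jacobi--Anger expansion, and then bound the tail using the series definition of $J_n$ given in \autoref{defJ0}.

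First, I would expand
\[ e^{i r \sin(\theta - \varphi)} = \sum_{k \in \Z} J_k(r) e^{i k (\theta - \varphi)}, \]
substitute $\theta = 2\pi m / M$, and interchange sums. The discrete orthogonality identity
\[ \frac{1}{M}\sum_{m=0}^{M-1} e^{2\pi i k m/M} = \begin{cases} 1 & \text{if } M \mid k, \\ 0 & \text{otherwise,} \end{cases} \]
collapses the sum to
\[ \frac{1}{M}\sum_{m=0}^{M-1} e^{ir \sin(2\pi m/M - \varphi)} = \sum_{\ell \in \Z} J_{\ell M}(r) e^{-i \ell M \varphi}. \]
Isolating the $\ell = 0$ term yields $J_0(r)$, so the quadrature error equals $\sum_{\ell \neq 0} J_{\ell M}(r) e^{-i \ell M \varphi}$, whose modulus is at most $2 \sum_{\ell \geq 1} |J_{\ell M}(r)|$.

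The main work is then bounding $|J_n(r)|$ for $n \geq M \geq \frac{e}{2}r$. Using \eqref{J0powerSeries}, the series for $J_n(r)$ is alternating with strictly decreasing terms as soon as the ratio of consecutive terms is less than $1$, i.e. for $n$ sufficiently large compared to $r$; the hypothesis $M \geq \frac{e}{2}r$ guarantees this for every $n \geq M$. The alternating series estimate then gives
\[ |J_n(r)| \leq \frac{1}{n!}\left(\frac{r}{2}\right)^n, \]
and combining with the Stirling lower bound $n! \geq \sqrt{2\pi n}\, (n/e)^n$ yields
\[ |J_n(r)| \leq \frac{1}{\sqrt{2\pi n}} \left(\frac{er}{2n}\right)^n. \]

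Finally I would sum this bound over $n = \ell M$, $\ell \geq 1$. Writing $t \isdef \frac{er}{2M} \leq 1$, the tail is
\[ 2 \sum_{\ell \geq 1}|J_{\ell M}(r)| \leq \frac{2}{\sqrt{2\pi M}}\sum_{\ell \geq 1} \frac{1}{\sqrt{\ell}}\, \frac{t^{\ell M}}{\ell^{\ell M}}. \]
The $\ell = 1$ term contributes exactly $t^M = (er/(2M))^M$, and for $\ell \geq 2$ each summand is bounded by $t^M \cdot (t/\ell)^{(\ell-1)M} \leq t^M \cdot 2^{-(\ell-1)M}$, so the series converges and is dominated by a geometric tail. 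Pulling out the leading factor $(er/(2M))^M$ gives the announced bound with an absolute constant $K$ independent of $r$, $M$, and $\varphi$.

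The only mildly delicate point is verifying that $|J_n(r)| \leq \frac{(r/2)^n}{n!}$ holds uniformly for all $n \geq M$ under the assumption $M \geq \frac{e}{2}r$; this reduces to checking that consecutive terms of the power series defining $J_n$ are decreasing in absolute value, which follows from a one-line comparison of indices. Everything else is bookkeeping on geometric sums.
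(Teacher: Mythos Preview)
Your argument follows the same route as the paper's: expand $e^{ir\sin(\theta-\varphi)}$ via Jacobi--Anger, use discrete orthogonality to identify the aliasing error as $\sum_{\ell\neq 0}J_{\ell M}(r)e^{-i\ell M\varphi}$, then bound the tail. Your treatment of the tail summation is in fact more explicit than the paper's, which simply cites the asymptotic $J_k(r)\sim (er/2|k|)^{|k|}$ and absorbs the uniformity issue into an unspecified constant.

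There is, however, one genuine gap: the alternating-series justification of $|J_n(r)|\le (r/2)^n/n!$ does not go through under the stated hypothesis. Monotone decrease of the terms in \eqref{J0powerSeries} from $m=0$ onward requires $(r/2)^2\le (m+1)(m+n+1)$ for all $m\ge 0$, and at $m=0$ this means $(r/2)^2\le n+1$. The assumption $n\ge M\ge \tfrac{e}{2}r$ only gives a linear lower bound on $n$ in terms of $r$, not a quadratic one; for instance with $r=100$ and $n=136$ the ratio of the second term to the first is $2500/137>18$, so the series is far from monotone at the start. The inequality $|J_n(r)|\le (r/2)^n/n!$ is nevertheless true for every $r>0$ and every integer $n\ge 0$: it follows immediately from Poisson's integral representation
\[J_n(r)=\frac{(r/2)^n}{\Gamma(n+\tfrac12)\sqrt{\pi}}\int_{-1}^1(1-t^2)^{n-1/2}\cos(rt)\,dt\]
by bounding $|\cos(rt)|\le 1$ and evaluating the remaining Beta integral. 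With this replacement your proof is complete and, on this particular step, cleaner than the paper's.
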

\noindent In order to prove this proposition, we first prove a result on Fourier series
\begin{lemma} For any $2\pi-$periodic $\mathcal{C}^2$ complex-valued function $f$ one has 
	\[\dfrac{1}{2\pi}\int_{0}^{2\pi}f - \dfrac{1}{M}\sum_{m=0}^{M-1}f\left(\frac{2m\pi}{M} \right) = - \sum\limits_{k \in \Z^*}c_{kM}(f),\]
	where $c_n(f)$ denotes the Fourier coefficient of $f$ defined as 
	\[c_n(f) = \dfrac{1}{2\pi}\int_{0}^{2\pi}f(x)e^{-inx}dx.\]
\end{lemma}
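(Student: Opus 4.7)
The plan is to expand $f$ in its Fourier series, substitute into both the integral and the discrete sum, and use the elementary identity for geometric sums of roots of unity.

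More precisely, I would first use that since $f$ is $\mathcal{C}^2$ and $2\pi$-periodic, integration by parts twice gives $|c_n(f)| \leq C/n^2$ for some constant $C$, so the Fourier series $f(x) = \sum_{n \in \Z} c_n(f) e^{inx}$ converges absolutely and uniformly. This regularity is precisely what is needed to legitimately exchange summation and finite summation in the next step. The integral term is simply $\frac{1}{2\pi}\int_0^{2\pi} f = c_0(f)$.

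For the discrete sum, I would substitute the Fourier expansion and swap the finite sum over $m$ with the series over $n$, giving
\[\frac{1}{M}\sum_{m=0}^{M-1} f\!\left(\frac{2m\pi}{M}\right) = \sum_{n \in \Z} c_n(f) \, \frac{1}{M}\sum_{m=0}^{M-1} e^{2\pi i n m/M}.\]
The key elementary observation is that the inner geometric sum equals $1$ if $M \mid n$ and $0$ otherwise, so only the indices $n = kM$ for $k \in \Z$ survive, yielding $\sum_{k \in \Z} c_{kM}(f)$. Separating $k=0$ from $k \in \Z^*$ gives $c_0(f) + \sum_{k \in \Z^*} c_{kM}(f)$.

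Subtracting the two expressions then gives the announced identity, with the $c_0(f)$ terms cancelling. There is no real obstacle here: the only subtle point is justifying the interchange of summation, which is immediate from the $O(1/n^2)$ decay of the Fourier coefficients implied by the $\mathcal{C}^2$ hypothesis (in fact $\mathcal{C}^1$ with absolutely summable Fourier series would suffice, but $\mathcal{C}^2$ gives absolute summability for free).
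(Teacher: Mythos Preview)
Your proposal is correct and follows essentially the same approach as the paper's own proof: expand $f$ in its absolutely convergent Fourier series (justified by the $\mathcal{C}^2$ hypothesis), substitute into the discrete average, use the geometric-sum identity to pick out the indices in $M\Z$, and subtract. If anything, your write-up is slightly more careful than the paper's in spelling out the $O(1/n^2)$ decay that legitimizes the interchange of summations.
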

\begin{proof}
	Since $f$ is $\mathcal{C}^2$, it is equal to its Fourier Series, which converges normally: \[\forall x \in \mathbb{R}, f(x) = \sum_{k\in\Z} c_k(f)e^{ikx}.\] Using this expression, we obtain \[\dfrac{1}{M}\sum_{m=0}^{M-1}f\left(\frac{2m\pi}{M}\right) = \sum\limits_{k\in \Z^*}c_k(f)\left(\frac{1}{M}\sum_{m=0}^{M-1}e^{ik\frac{2m\pi}{M}}\right).\] 
	Now observe that
	\[\dfrac{1}{M}\sum_{m=0}^{M-1}e^{ik\frac{2m\pi}{M}} =   \begin{cases}
	1 &\text{ if } k \notin M\Z, \\
	0 &\text{ otherwise.}
	\end{cases}\] 
	Therefore \[\int_{0}^{2\pi}f(x)dx - \dfrac{1}{M}\sum_{m=0}^{M-1}f\left(\frac{2m\pi}{M} \right) = c_0(f) - \sum\limits_{k \in M\Z}c_{k}(f) = - \sum\limits_{k \in \Z^*}c_{kM}(f).\]
	
\end{proof}
\noindent We now turn to the proof of \autoref{QuadratureCirc}: 
\begin{proof}
	The result is based on the following fact: 
	\[J_0(r) =  \int_0^{2\pi} e^{ir\sin(x)}dx = \int_0^{2\pi} e^{ir\sin(x - \varphi)}dx.\] 
	Let $f : x \mapsto e^{ir\sin(x - \varphi)}$. Let us recall the integral representation of the Bessel function of the first kind and of order $k$ where $k$ is a relative integer: \[J_k(r) =  \int_{0}^{2\pi}e^{ir\sin(x)}e^{-ikx}dx =  e^{-ik\varphi}\int_{0}^{2\pi}e^{ir\sin(x - \varphi)}e^{-ikx}dx.\] Thus, one has $c_k(f) = e^{ik\varphi}J_k(r)$. Consequently, the former Lemma yields 
	\[J_0(r) -  \dfrac{1}{M} \sum_{j=0}^{M-1} e^{ir \sin \left( \frac{2j\pi}{M}-\varphi \right)} = -\sum_{k\in \Z^*}e^{iNk\varphi}J_{Nk}(r).\] 
	For large $\abs{k}$, 
	\[J_k(r) \sim \left(\dfrac{er}{2\abs{k}}\right)^{\abs{k}}.\]
	Therefore, there exists a constant $C'$ such that: 
	\begin{eqnarray*}
		\abs{J_0(r) -  \dfrac{1}{M}\sum_{m=0}^{M-1}e^{ir\sin\left(\frac{2m\pi}{M}-\varphi\right)}} &\leq& C' \sum_{k\in \Z^*} \left(\dfrac{er}{2M|k|}\right)^{M|k|}\\
		&\leq& K \left(\dfrac{er}{2M}\right)^M 
	\end{eqnarray*}
	for an appropriate choice of $K$.	
	
\end{proof}
We conclude with the following result
\begin{proposition} Let $\varepsilon >0$, $r>0$, and assume $M > \dfrac{e}{2}r + \log\left(\dfrac{K}{\varepsilon}\right)$. Then 
	\[\left|J_0(r) -  \dfrac{1}{M}\sum_{m=0}^{M-1}e^{ir\sin\left(\frac{2m\pi}{M}-\varphi\right)} \right| \leq \varepsilon.\]
	\label{suboptCirc}
\end{proposition}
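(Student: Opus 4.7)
The plan is to reduce this directly to \autoref{QuadratureCirc}, which already expresses the quadrature error as $K(er/(2M))^M$. All that remains is to translate this power-of-$M$ decay together with the multiplicative condition $M \geq \frac{e}{2}r$ of the previous theorem into the additive condition $M > \frac{e}{2}r + \log(K/\varepsilon)$ and a clean $\varepsilon$ bound. No new analytic idea is needed beyond the previous theorem and the elementary inequality $(1 - t/M)^M \leq e^{-t}$ valid for $0 \leq t \leq M$.

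Concretely, in the nontrivial regime $\varepsilon \leq K$ one has $\log(K/\varepsilon) \geq 0$, so the hypothesis immediately implies $M \geq \frac{e}{2}r$ and \autoref{QuadratureCirc} applies. Setting $\delta := M - \frac{er}{2}$, the assumption reads $\delta > \log(K/\varepsilon)$, and one can rewrite $\frac{er}{2M} = 1 - \frac{\delta}{M}$. The standard inequality above then yields
\[K\left(\frac{er}{2M}\right)^M \leq K e^{-\delta} < K\exp\bigl(-\log(K/\varepsilon)\bigr) = \varepsilon,\]
which is precisely the claim. In the opposite, trivial regime $\varepsilon > K$, the target tolerance already exceeds the worst case of the bound in \autoref{QuadratureCirc} whenever the latter applies, so there is nothing further to check provided one reads the statement as requiring $M$ to be large enough for the previous theorem to be invoked.

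The whole argument is essentially bookkeeping: the real analytic work was done in \autoref{QuadratureCirc}. I do not anticipate any obstacle beyond keeping track of the constant $K$ and verifying that $\delta$ lies in the range $[0,M]$ where $(1 - \delta/M)^M \leq e^{-\delta}$ is valid, which is automatic here since $\delta = M - \frac{er}{2} \leq M$.
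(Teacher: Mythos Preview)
Your proof is correct and essentially identical to the paper's. The paper invokes \autoref{QuadratureCirc} and then the inequality $\left(\frac{A}{A+B}\right)^{A+B} \leq e^{-B}$ for $A,B>0$, which with $A=\tfrac{er}{2}$ and $A+B=M$ is exactly your $(1-\delta/M)^M \leq e^{-\delta}$; your explicit handling of the trivial regime $\varepsilon > K$ is a minor nicety the paper omits.
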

\begin{proof}
	This result is a direct consequence of the previous proposition together with the following inequality: for any $(A,B) \in \left(\mathbb{R}_+^*\right)^2$ one has
	\[ \left( \dfrac{A}{A+B}\right)^{A+B} \leq e^{-B}.\]
	To prove it, we take the logarithm of this quantity, 
	\[f(A,B) = -B\left(1+\dfrac{A}{B}\right)\log\left(1+\dfrac{B}{A}\right)\]
	and observe that for any positive $x$, \[\left(1+\dfrac{1}{x}\right)\log(1+x) \geq 1.\]
	
\end{proof}
																																																		
Hence, we can approximate very efficiently the functions $e_p$ of the previous paragraph as a finite sum as follows. We define the quadrature points $\bs{\xi}_0^p, \bs{\xi}_1^p, ..., \bs{\xi}^p_{M_p-1}$  by
\begin{equation}
	\label{defXimp}
	\bs{\xi}_m^p := \displaystyle e^{i\frac{2\pi m}{M_p}} \quad 1\leq p \leq P, \quad 0 \leq m \leq M_p -1.
\end{equation}
With this definition, for any $\bs{x} \in \mathbb{R}^2$
\[ e_p(|\bs{x}|) = C_p J_0(\rho_p |x|)\approx \dfrac{C_p}{M_p}\sum_{m=0}^{M_p-1}{e^{i \rho_p \bs{x} \cdot \bs{\xi}_m^p}},\]
and the approximation is valid at a precision $\varepsilon$ as soon as $M_p > \frac{e}{2}\rho_p|\bs{x}| + \log\left(\dfrac{KC_p}{\varepsilon}\right)$.

\section{Estimations of complexities}
\label{sec:complexities}
We now turn to the complexity estimate of the complete algorithm given in \autoref{sec:overview}. We fix $\alpha \in [0,1/6]$ and let 
\begin{equation}
	\label{def_a}
	a = \dfrac{\abs{\log\varepsilon}^{2/3}}{N_z^{2/3 - \alpha}}
\end{equation}  
as in \autoref{The:GlobalComplexity}. We will give a bound for the number of operations of each step of the algorithm in function of $N_z$, $\varepsilon$ and $\alpha$. We note  $C_{\textup{SBD}}$, $C_{\textup{circ}}$, $C_{\textup{assemble}}$, $C_{\textup{far}}$ and $C_{\textup{close}}$ respectively, the number of operations required  to produce the SBD, the circular quadrature, to assemble the close correction matrix $D$ defined in \eqref{defD}, to compute the far approximation defined in \eqref{FarApprox}, and to apply $D$ on a vector. We will denote by $C$ any positive constant that is independent of $N_z$, $\varepsilon$ and $\alpha$. 
\subsection{Offline computations}
The first part of the algorithm consists in combining the SBD with the circular quadratures detailed in the previous two sections to derive an approximation scheme for the $\log$ function in the following form: 
\[ \log |\bs{x}| = \sum_{\nu=1}^{N_\xi} \hat{w}_{\nu} e^{i \bs{x}\cdot \bs{\xi}_{\nu}} \quad a < \abs{\bs{x}} < 1\]
valid to the accuracy $\varepsilon$.
																																																		
\paragraph{Sparse Bessel Decomposition:}
We first compute a SBD of $\log$ on the ring $\{a < r < 1\}$ to reach the accuracy $\frac{\varepsilon}{2}$, as  developed in Sections \ref{sec:SBD} and \ref{sec:ApplicationLaplaceHelmholtz}. We write this approximation
\[ \log r \approx \sum_{p=1}^P \alpha_p e_p(r), \quad a < r < 1.\]
\autoref{theRadialQuadLaplaceErreur} shows that the accuracy is reached for 
\begin{equation}
	\label{eq:valeurDePenFonctionDe_a}
	P = O\left(\dfrac{|\log(\varepsilon)|}{a}\right).
\end{equation}
Since the coefficients $\alpha_1,\cdots,\alpha_P$ are obtained through the inversion of a $P \times P$ matrix, the computation of the SBD requires $O(P^3)$ computations. Therefore, there exists a constant $C>0$ independent of $N_z$, $\varepsilon$ and $\alpha$ such that 
\begin{equation}
	\label{Complex:SBD}
	C_{\textup{SBD}}(N_z,\varepsilon,\alpha) \leq C \abs{\log\varepsilon} N_z^{2 - 3\alpha}.
\end{equation}
																																																		
\paragraph{Circular quadrature:} We approximate each function $e_p$ using a circular quadrature as detailed in \autoref{sec:circular}. For each $p$, we choose the number $M_p$ of terms in the quadrature so that
\begin{equation}
	\label{temp1}
	\abs{J_0 (\rho_p \abs{\bs{x}}) - \dfrac{1}{M_p}\sum_{m=1}^{M_p} e^{i \rho_p \bs{x} \cdot \bs{\xi}_m^p}} \leq \frac{\varepsilon}{2P \abs{\alpha_p}C_p} \quad a < \abs{\bs{x}} < 1,
\end{equation}
where the quadrature points $\bs{\xi}_m^p$ are defined in  \eqref{defXimp}. \autoref{suboptCirc} implies that taking
\begin{equation}
	M_p > \frac{e}{2} \rho_p + \log\left(\dfrac{2KP |\alpha_p|}{\varepsilon}\right)
\end{equation} 
is sufficient to ensure \eqref{temp1}. In this case, triangular inequality implies that for $a \leq \abs{\bs{x}} \leq 1$, 
\[ \abs{\log\abs{\bs{x}} - \sum_{p=1}^{P} \sum_{m = 1}^{M_p} C_p \frac{\alpha_p}{M_p}e^{i \bs{x}\cdot \bs{\xi}^p_m}} \leq \varepsilon.\]
Bessel's inequality
\[\sum_{p=1}^P{|\alpha_p|^2} \leq \int_{B} \abs{\log|\bs{x}|}^2\]
ensures the boundedness of $\alpha_p$. Moreover, $\rho_p = O(p)$, implying $M_p = O(P)$, and hence, 
\begin{equation}
	\label{eq:NxiEnFonctionDeP}
	N_\xi = \sum_{p = 1}^P M_p = O(P^2).
\end{equation}
Since for any $p$, the computation of $(\bs{\xi}_m^p)_{1\leq m \leq M_p}$ has a linear complexity in $M_p$, we get:
\begin{equation}
	\label{complex:circ}
	C_{\textup{circ}}(N_z,\varepsilon,\alpha) \leq C \abs{\log\varepsilon}^{2/3} N_z^{4/3 - 2\alpha}.
\end{equation} 
Equations \eqref{Complex:SBD} and \eqref{complex:circ} yield the first part of \autoref{The:GlobalComplexity}.
																																																		
\paragraph{Close correction matrix:} Recall that $\rmax$ is defined as 
\[\rmax = \max_{1\leq k,l\leq N_z} |\bs{z}_k - \bs{z}_l|\]
where $\bs{z}$ are the data points in \eqref{discreteConv}. Let $\rmin = a \rmax$. We first determine the set $\mathcal{P}$ of all pairs $(k,l)$ such that $\abs{\bs{z}_k - \bs{z}_l} \leq \delta_{\min}$. This is the classical "fixed-radius near neighbors search", and can be solved in $O(N_z \log N_z + \# P)$ operations (see for example \cite{bentley1975multidimensional, bentley1977complexity,turau1991fixed,dickerson1990fixed}). In order to compute an approximation of the close correction sparse matrix:
\[D_{kl} = \delta_{(k,l) \in \mathcal{P}} \left( \log\abs{\bs{z}_k - \bs{z}_l} - \sum_{\nu = 1}^{N_{\xi}}e^{i (\bs{z}_k - \bs{z}_l)\cdot \boldsymbol{\xi}_\nu} \hat{\omega}_\nu\right),\]
we need $\#\mathcal{P}$ evaluations of $\log$, and the computation of 
\[\operatorname{NUFFT}_-[\left(\bs{z}_k - \bs{z}_l\right)_{(k,l)\in\mathcal{P}},\bs{\xi}]\big(\hat{\nu})\]
at precision $\varepsilon$. For data uniformly distributed on a curve, the number of close pairs scales as 
\begin{equation}
	\label{eq:NombreDinteractionsProches}
	\# \mathcal{P} = O\left(\dfrac{\rmin}{\rmax} N_z\right) = O(N_z^2 a).
\end{equation}
One can check that $N_\xi \leq N_z^2a$ using \eqref{def_a}, \eqref{eq:valeurDePenFonctionDe_a} and \eqref{eq:NxiEnFonctionDeP}, so that 
\begin{equation}
	\label{Complex:assemble}
	C_{\textup{close}} \leq C C_{\textup{NUFFT}}(\varepsilon)\abs{\log{\varepsilon}}^{2/3}N_z^{4/3 + \alpha}\log(N_z).
\end{equation}
This is the second part of \autoref{The:GlobalComplexity}. 
\subsection{On-line Computations}
\paragraph{Far approximation:} Recall that for all $k \in \{1,\cdots,N_z\}$, the far approximation is defined by the following equation:
\[ q^{\text{far}}_k = \sum_{l=1}^{N_z} G_{\textup{approx}}(\bs{z}_k - \bs{z}_l) f_l,\]
where, according to the previous subsection,
\[G_{\textup{approx}}(\bs{x}) = \sum_{p=1}^P\sum_{m = 1}^{M_p} \dfrac{\alpha_pC_p}{M_p}e^{i \bs{x}\cdot \bs{\xi}^p_m}.\]
Define $\hat{\omega} = \varInRange{\hat{\omega}}{\nu}{1}{N_\xi}$ and $\bs{\xi} = \varInRange{\bs{\xi}}{\nu}{1}{N_\xi}$ such that
\[ G_{\textup{approx}}(\bs{x}) = \sum_{\nu=1}^{N_\xi} \hat{\omega}_\nu e^{i \bs{x} \cdot \bs{\xi}_\nu}.\]
To compute $q^{\textup{far}}$, recall the following three steps:
\begin{itemize}
	\item[(i)] \textbf{Space $\rightarrow$ Fourier: } Compute $\hat{f} = \textup{NUFFT}_-[\bs{z},\boldsymbol{\xi}](f),$
	\item[(ii)] \textbf{Fourier multiply} Perform elementwise multiplication by $\hat{\omega}$: 
	\[\hat{g}_{\nu} = \hat{\omega}_\nu \hat{f_\nu},\]
	\item[(iii)] \textbf{Fourier $\rightarrow$ Space: } Compute $q^{\text{far}} =  \textup{NUFFT}_+[\bs{z},\boldsymbol{\xi}](\hat{g}).$
\end{itemize}
One can check that \eqref{def_a} and \eqref{eq:NxiEnFonctionDeP} imply $N_{\xi} \geq N_z$, thus
\begin{equation}
	\label{Complex:far}
	C_{\textup{far}}(N_z,\varepsilon,\alpha) \leq C 	\abs{\log \varepsilon}^{2/3} C_{\textup{NUFFT}}(\varepsilon) N_z^{4/3 - 2\alpha } \log(N_z).
\end{equation}
\paragraph{Close correction:} $D$ has $\# \mathcal{P}$ non-zero entries so 
\begin{equation}
	\label{Complex:close}
	C_{\textup{close}}(N_z,\varepsilon,\alpha) \leq C \abs{\log\varepsilon}^{2/3}N_z^{4/3 + \alpha}.
\end{equation}
Summing \eqref{Complex:far} and \eqref{Complex:close} yields the second part of \autoref{The:GlobalComplexity}. 
																																																		
\begin{Remark}
	The extreme cases $\alpha= 0$ and $\alpha = 1/6$ correspond respectively to the situations where one wish to either minimize the total (off-line $+$ on-line) computation time or just the on-line time. The complexities "off-line" and "on-line" then become (omitting the dependence in $\varepsilon$):	
	\begin{table}[H]
		\centering
		\begin{tabular}{ |c|c|c| } 
			\hline
			               & Off-line                          & On-line        \\ 
			\hline
			$\alpha = 0$   & $O(N_z^2)$                        & $O(N_z^{4/3})$ \\ 
			$\alpha = 1/6$ & $O\left(N_z^{3/2}\log N_z\right)$ & $O(N_z^{3/2})$ \\ 
			\hline
		\end{tabular}
		\caption{Complexity of the algorithm (omitting dependence in $\varepsilon$) in the two extreme cases $\alpha=0$ and $\alpha = 1/6$.}
	\end{table}									
\end{Remark}
																																																		
\section{Numerical examples}
																																																		
\subsection{System of $N\times N$ particles}
																												
To assess for the numerical performance of our method, we first generate two sets of $N$ points $(\bs{x}_k)$ and $(\bs{y}_k)$ uniformly distributed in a square, for $N$ ranging from $10$ to $10^7$, and compute the discrete convolution  
\[ q_k = \sum_{l=1}^N \log\abs{x_k - y_l} f_l\]
where $f_l$ is a random vector in $\R^{N}$.
We measure both the time needed for off-line and on-line parts. We also measure the amount of memory occupied by the assembled operator (Memory usage goes a little bit above this value during the computation). The results are displayed in \autoref{tablePerf}. The computer used for this test is a laptop cadenced to 1.6 GHz and possessing 32 GB of memory. 
																																																		
																																																		
\begin{table}[H]
	\centering
	\begin{tabular}{lllll}
		\centering
		           & \vline\quad Off-line (s) & On-line(s) & Memory  & Proportion of full matrix size \\
		\hline					 				
		           & \vline                   &            &         &                                \\ 
		$N = 10$   & \vline\quad $0.034$      & $0.001$    & 7 kb    & $800\%$ (no compression)       \\
		$N = 10^2$ & \vline\quad $0.027$      & $0.002$    & 83 kb   & $105\%$ (no compression)       \\
		$N = 10^3$ & \vline\quad $0.049$      & $0.003$    & 1.02 Mb & $14\%$                         \\
		$N = 10^4$ & \vline\quad $0.37$       & $0.02$     & 10.7 Mb & $1.4\%$                        \\
		$N = 10^5$ & \vline\quad $3.4$        & $0.16$     & 109 Mb  & $0.15\%$                       \\
		$N = 10^6$ & \vline\quad $47.2$       & $2.6$      & 1.07 Gb & $0.015\%$                      \\
		$N = 10^7$ & \vline\quad $441$        & $41$       & 7.13 Gb & $9.8.10^{-4}\%$                \\		
	\end{tabular}
	\caption{Performances of the algorithm}
	\label{tablePerf}
\end{table}	
																																																		
\subsection{Sound canceling}
																																																		
Consider $N_z$ punctual 2-dimensional sound sources located at $\varInRange{\bs{z}}{l}{1}{N_z}$  and emitting at a single frequency $f$, with unit amplitude and phases $\varInRange{\varphi}{l}{1}{N_z}$. That is, for each $l \in \{1,\cdots, N_z\}$, the source number $l$ generated an acoustic pressure at each point $\bs{x}$ and time $t$ in $\R^2$ equal to 
\[p_l(\bs{x},t) = \Re\left[-\frac{i}{4}H^{(1)}(k\abs{\bs{z}_l - \bs{x}}) e^{-i\left(\omega t - \varphi_l \right)}\right],\]
where $\omega = 2\pi f$, $k = \frac{2\pi f}{c}$, with $c$ the celerity of the sound waves, and $H^{(1)}$ is the Hankel function of first kind already defined in \autoref{HelmoholtzSubSection}. 
By superposition, the resulting pressure at $\bs{x}$ is 
\[p(\bs{x},t) = \Re\left[ -\frac{i}{4}e^{-i \omega t} \sum_{l=1}^{N_z} H^{(1)}(k\abs{\bs{z}_l - \bs{x}})e^{i \varphi_l} \right],\]
and the sound intensity is proportional to
\[\Pi(\bs{x},\varphi) =  \abs{ \sum_{l=1}^{N_z} H^{(1)}(k\abs{\bs{z}_l - \bs{x}})e^{i \varphi_l} }^2.\]
Suppose one wishes to choose the phases that minimize the sound intensity in a prescribed zone $\Omega$ (called the silence zone), that is,
\[ \varphi^* = \underset{\varphi \in [0,2\pi]^{N_z}}{\text{argmin}}\int_{\Omega}\Pi(\bs{x},\varphi). \]
If we approximate the integral over $\Omega$ by a uniform quadrature, this leads to solving 
\[\varphi^* = \underset{\varphi \in [0,2\pi]^{N_z}}{\text{argmin}} \sum_{q=1}^{Q}\Pi(\bs{x}_q,\varphi), \]
where $\bs{x}_q$ are the coordinated of the quadrature points. If we let $A_{l,q} = H^{(1)}(k\abs{\bs{x}_q - \bs{z}_l})$ and $q(\varphi) = \left(e^{i\varphi_l}\right)_{1 \leq l \leq N_z}$, this rewrites
\[ \varphi* = \underset{\varphi \in [0,2\pi]^{N_z}}{\text{argmin}} q(\varphi)^T A^T A ~ ~ q(\varphi).\]
Using our method, the cost function associated to this minimization problem can be evaluated rapidly, as well as its gradient. Thus, using black-box optimization procedures, we can find rapidly good candidates for $\varphi^*$. In \autoref{figMinimizationHelmholtz}, we show the result of one such optimization, with $N_z=100$ sound sources randomly located on a half circle and where the zone of silence is represented by the red circle. The silence zone is discretized using a mesh of $Q = 2.5\times 10^5$ points. We stopped the optimization after $500$ evaluations of the objective function and its gradient, which required a total computation time of 3 minutes on our computer. Note that since the silence zone is located far away from the sources, we don't need to use a close correction matrix to compute the objective function. To produce the image in \autoref{figMinimizationHelmholtz}, the resulting field is evaluated at $2 \times 10^7$ grid points.
																																																		
\begin{figure}
	\centering	
	\includegraphics[scale = 0.45]{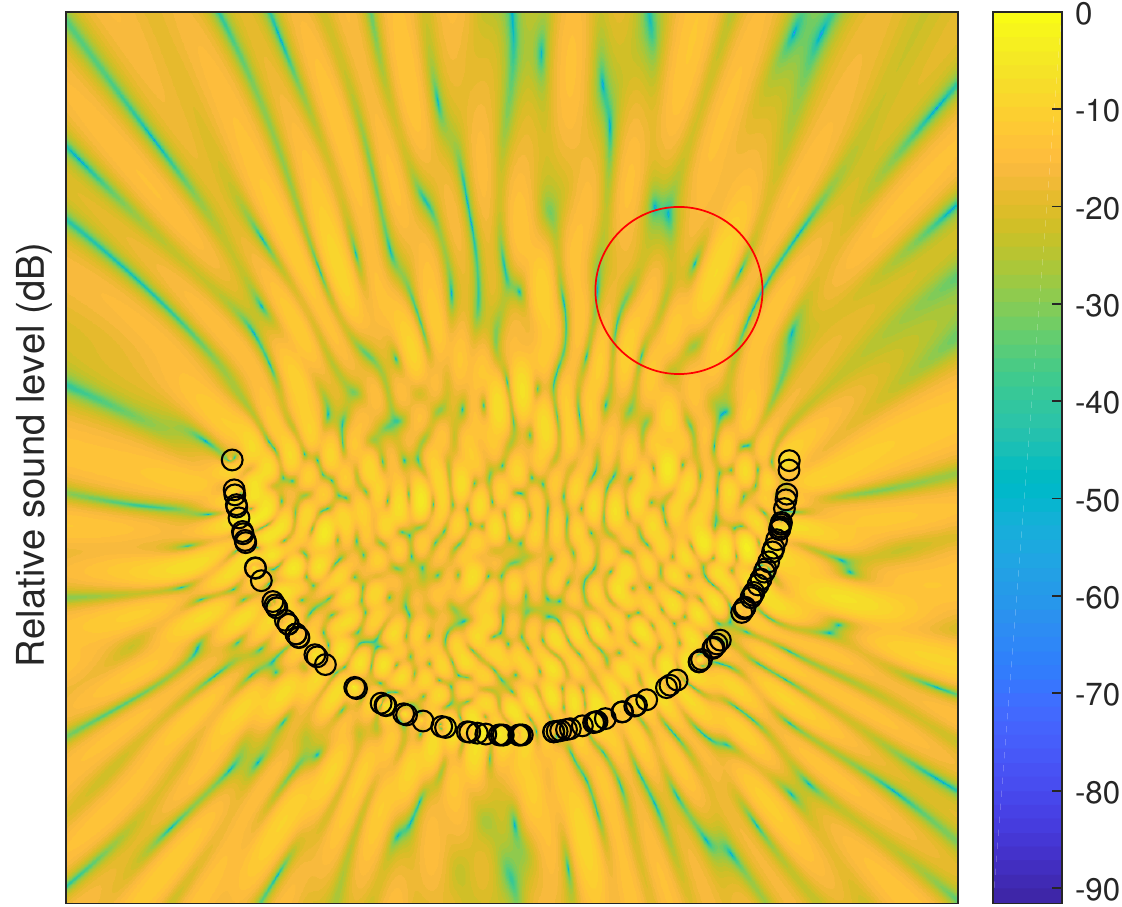}		
	\includegraphics[scale = 0.45]{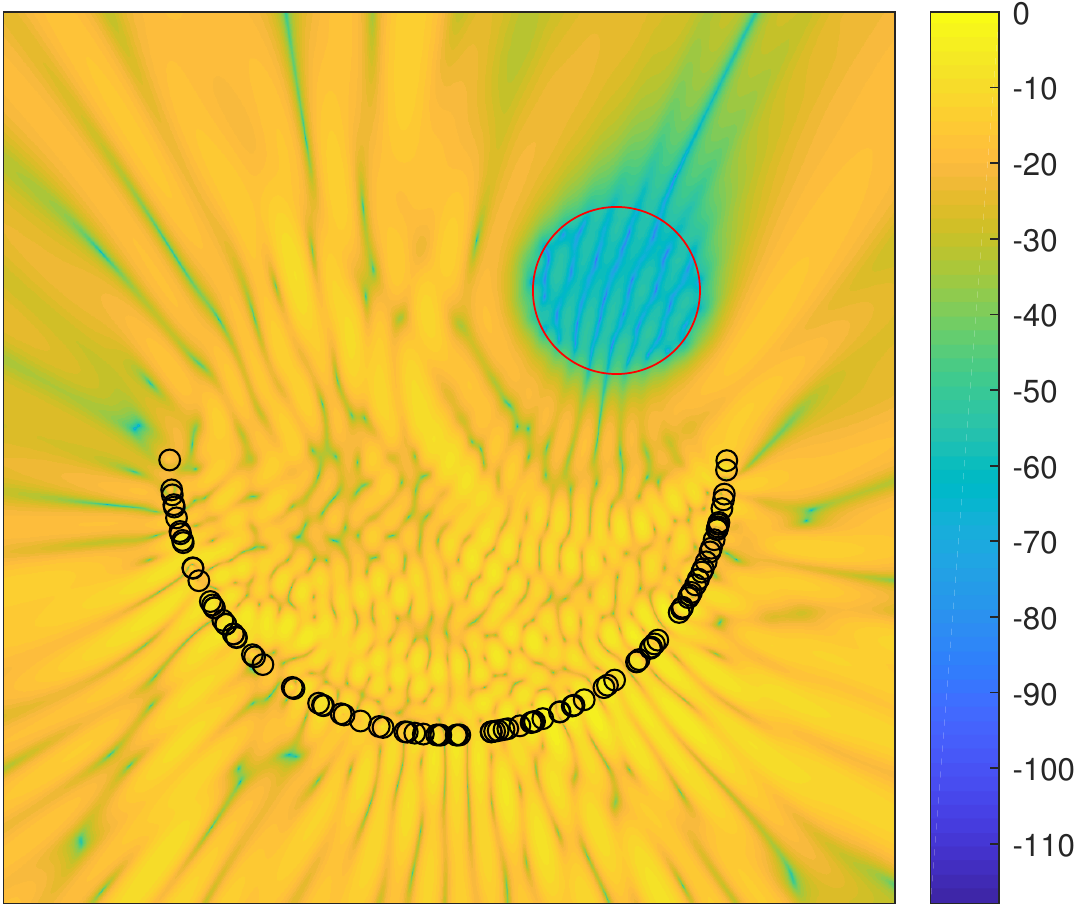}
	\caption{Sound level (in dB) created by $100$ punctual 2D sound sources with wavenumber $k\approx 90$ located at random spots on a half circle with radius $r = 0.5$. The sources locations are indicated by the small black circles. On the left, we show the sound level before the optimization of the phases. On the right, we show the result once the phases of the sources have been optimized to minimize the sound level in the region delimited by the red circle. }
	\label{figMinimizationHelmholtz}
\end{figure}

\bibliography{biblio}
\bibliographystyle{plain}

\end{document}